\newtheorem{theorem}{Theorem}[section]
\newtheorem{lemma}[theorem]{Lemma}
\newcommand{\col}[3]{ \renewcommand{\arraystretch}{#1}
                \left[\!\! \begin{array}{c} #2 \\ #3 \end{array} \!\!\right] }
\newcommand{\RR}{\mathbb{R}}
\newcommand{\CC}{\mathbb{C}}
\newcommand{\NN}{\mathbb{N}}
\newcommand{\ZZ}{\mathbb{Z}}
\newcommand{\dA}{{\partial\!A}}
\newcommand{\Om}{\Omega}
\newcommand{\Omi}{\Omega_\mathrm{i}}
\newcommand{\Ome}{\Omega_\mathrm{e}}
\newcommand{\Omext}{{\overline\Om}^c}
\newcommand{\Gi}{{\Gamma_{\!\iii}}}
\newcommand{\Ge}{{\Gamma_{\!\eee}}}
\newcommand{\Ga}{{\Gamma_{\!\alpha}}}
\newcommand{\eee}{\mathrm{e}}
\newcommand{\iii}{\mathrm{i}}
\newcommand{\ds}{\displaystyle}
\newcommand{\p}{\partial}
\newcommand{\eqnref}[1]{(\ref {#1})}
\newcommand{\beq}{\begin{equation}}
\newcommand{\eeq}{\end{equation}}
\newcommand{\be}{\begin{equation*}}
\newcommand{\ee}{\end{equation*}}
\newcommand{\half}{{\textstyle \frac{1}{2}}}
\newcommand{\Kcal}{\mathcal{K}}
\newcommand{\Scal}{\mathcal{S}}
\newcommand{\la}{\langle}
\newcommand{\ra}{\rangle}
\numberwithin{equation}{section}
\numberwithin{figure}{section}
\begin{document}

\newcommand{\TheTitle}{Spectral analysis of the Neumann-Poincar\'{e} operator for thin doubly connected domains}
\newcommand{\TheAuthors}{D. Choi and M. Lim}

\title{{\TheTitle}}

\author
{Doosung Choi\footnotemark[1]
\footnotemark[2]
\and Mikyoung Lim\thanks{\footnotesize Department of Mathematical Sciences, Korea Advanced Institute of Science and Technology,  Daejeon 34141, South Korea ({mklim@kaist.ac.kr}).}
\and Stephen P. Shipman\thanks{\footnotesize Department of Mathematics,  Louisiana State University, Baton Rouge, LA, USA ({dchoi@lsu.edu}, {shipman@lsu.edu}).}
}

\date{\today}
\maketitle

\begin{abstract}
We analyze the spectrum of the Neumann-Poincar\'{e} (NP) operator for a doubly connected domain lying between two level curves defined by a conformal mapping, where the inner boundary of the domain is of general shape.
The analysis relies on an infinite-matrix representation of the NP operator involving the Grunsky coefficients of the conformal mapping and an application of the Gershgorin circle theorem.  As the thickness of the domain shrinks to zero, the spectrum of the doubly connected domain approaches the interval $[-1/2,1/2]$ in the Hausdorff distance and the density of eigenvalues approaches that of a thin circular annulus.
\end{abstract}
\noindent {\footnotesize {\bf Mathematics Subject Classification.} {
45C05; 45P05; 35J05; 35P05; 31A10.
}}

\noindent {\footnotesize {\bf Keywords.} 
{Neumann-Poincar\'{e} operator; integral equations; potential theory; doubly connected domains; Grunsky coefficients}
}

\section{Introduction}

Let $\Omega$ be a bounded domain in the real $(x,y)$-plane with an analytic boundary $\Gamma$.  The Neumann--Poincar\'e (NP) operator associated with $\Omega$ is the boundary-integral operator
$$
\Kcal_\Gamma[\phi](x)= -\frac{1}{2\pi} \int_\Gamma \frac{\left\la x-y,\nu_y\right\ra}{|x-y|^2}\phi(y)\, d\sigma(y),\quad x\in\Gamma,
$$
in which $\nu_x$ is the unit normal vector at $x$. 
Its $L^2$-adjoint, which we call the adjoint NP operator,~is
$$
\Kcal_\Gamma^*[\phi](x)= \frac{1}{2\pi} \int_\Gamma \frac{\left\la x-y,\nu_x\right\ra}{|x-y|^2}\phi(y)\, d\sigma(y),\quad x\in\Gamma.
$$
The normal vector $\nu_x$
is directed into the region exterior to~$\Omega$.
The adjoint NP operator is related to the single-layer potential
\begin{align*}
\ds&\Scal_\Gamma[\phi](x)= \frac{1}{2\pi} \int_\Gamma \ln|x-y|\phi(y)\, d\sigma(y)\quad \mbox{for }x\in\RR^2,
\end{align*}
which is a harmonic function in $\RR^2\!\setminus\!\Gamma$, by the ``jump relations" on $\Gamma$:
\beq \label{eqn:Kstarjump}
\begin{aligned}
\Scal_\Gamma[\phi]\big|^{+}&=\Scal_\Gamma[\phi]\big|^{-},\\
\frac{\partial}{\partial\nu}\Scal_\Gamma[\phi]\Big|^{\pm}&=\left(\pm\frac{1}{2}I+\Kcal^*_\Gamma\right)[\phi].
\end{aligned}
\eeq
We will denote the identity operator in any space by~$I$.
Setting $H=\Scal_\Gamma[\phi]$, the jump relations can be written equivalently~as
\begin{equation}\label{Kgeom}
\begin{aligned}
  \phi \;=\; \left[ \frac{\p H}{\p\nu} \right]_\Gamma &\;:=\; \frac{\p H}{\p\nu}\Big|^+ - \frac{\p H}{\p\nu}\Big|^- \\
  \Kcal^*_\Gamma[\phi] \;=\; \left\langle \frac{\p H}{\p\nu} \right\rangle_{\!\!\Gamma} &\;:=\; \frac{1}{2} \left( \frac{\p H}{\p\nu}\Big|^+ + \frac{\p H}{\p\nu}\Big|^- \right).
\end{aligned}
\end{equation}

The NP operator naturally arises in interface problems for the Laplace equation, and its spectral properties have been intensely studied.  
The NP operator is not symmetric on $L^2(\Gamma)$ unless $\Omega$ is a disk or a ball \cite{Lim:2015:SBI}, but it can be symmetrized using Plemelj's symmetrization principle $\Kcal_\Gamma\Scal_\Gamma=\Scal_\Gamma\Kcal^*_\Gamma$~\cite{Khavinson:2007:PVP}, whereby $\Kcal^*_\Gamma$ becomes self-adjoint on $H^{-1/2}(\Gamma)$ and its spectrum $\sigma(\Kcal_{\Gamma}^*)$ is contained in $(-1/2,1/2]$ \cite{Escauriaza:2004:TPS, Fabes:1992:SRC,Kellogg:1929:FPT}.  When $\Gamma$ is of class $C^{1,\alpha}$, the NP operator is compact and, thus, $\sigma(\Kcal_{\Gamma}^*)$ is a sequence that accumulates to $0$. In two dimensions, the so-called twin-spectrum relation holds \cite{Mayergoyz:2005:ERN}, that is, the eigenvalues come in plus-minus pairs.

The spectrum of the NP operator is known explicitly for special shapes.  For a circle, it is $\{0, 1/2\}$, and it is known for a sphere, an ellipse, an ellipsoid \cite{Neumann:1887:MAM,Ritter:1995:SEI}, confocal ellipses~\cite{Chung:2014:CAL}, and concentric disks and spheres~\cite{Ammari:2013:STN, Ammari:2014:STN2}.  For a torus, the NP operator has infinitely many negative eigenvalues \cite{Ando:2019:SSN}.  
When $\Gamma$ has corners or cusps, the NP operator has essential spectrum~\cite{PerfektPutinar2017,Cha:2018:PME}, and interesting cases and phenomena have been studied, such as  
intersecting disks \cite{Kang:2017:SRN,Helsing:2017:CSN}, wedge~\cite{Perfekt2019} and  conical~\cite{HelsingPerfekt2018} shapes, touching disks and crescent domains~\cite{Jung:2023:SAN}, and embedded eigenvalues~\cite{LiPerfektShipman2021,LiShipman2019}.   We refer to \cite{Ammari:2018:MCM, Kang:2022:SGA,Khavinson:2007:PVP} and the references therein for more results on the spectrum of the NP operator. 

For thin domains, which is the focus of this paper, there are some interesting results. Ando {\itshape et.\,al.} \cite{Ando:2022:SSN1} showed that, for any divergent positive sequence $\{R_j\}_{j=1}^\infty$,  the union of spectra of the NP operator for planar rectangles $\Omega_j$ with the aspect ratios $R_j$ is densely distributed in $[-1/2,1/2]$, that is,
$$
\overline{\cup_{j=1}^\infty  \sigma(\Kcal_{\p \Omega_j}^*)} = [-1/2,1/2].
$$
Furthermore,  Ando {\itshape et.\,al.}~\cite{Ando:2022:SSN} obtained that the set of eigenvalues of the NP operator on a sequence of the prolate spheroids is densely distributed in the interval $[0, 1/2]$ as their eccentricities approach~$1$. They also showed that eigenvalues for the oblate ellipsoids fill up $[-1/2,1/2]$ as the ellipsoids become flatter.

In this work, we consider a doubly connected domain of general shape, whose inner and outer boundaries are two level curves defined by a conformal mapping.  
We prove that the spectrum of the NP operator for the doubly connected domain tends to $[-1/2,1/2]$ as thickness of the shape tends to zero, in the sense of the Hausdorff distance.  Moreover, the density of eigenvalues approaches that of a thin circular annulus.
The proof applies the Gershgorin circle theorem to an infinite matrix representation of the NP operator involving the Grunsky coefficients associated with the conformal mapping.

\section{The NP operator for a doubly connected domain}

Let $\Om$ be a simply connected bounded domain in the complex $z$-plane, with conformal radius~$\gamma$.  By the Riemann mapping theorem, there is a conformal mapping $z=\Psi(w)$ taking the exterior of the disk of radius $\gamma$ in the complex $w$-plane bijectively to the exterior ${\overline\Om}^c$ of~$\Om$, such that $z\sim w$ at infinity:
\begin{equation}
  z = \Psi(w)=w+a_0 + \frac{a_1}{w} + \frac{a_2}{w^2} + \frac{a_3}{w^3} + \cdots.
\end{equation}
For $r>\gamma$, let $\Gamma_{\!r}$ denote the curve in the $z$-plane equal to the image under $\Psi$ of the circle of radius $r$ in the $w$-plane,
\begin{equation}
  \Gamma_{\!r} \;=\; \{ \Psi(w)  : |w|=r \},
\end{equation}
which is the boundary of the region
\begin{equation}
  \Omega_r \;=\; \{ \Psi(w) : |w|<r \}.
\end{equation}
Choose two real numbers $r_\iii$ and $r_\eee$ such that
\begin{equation}
  \gamma < r_\iii < r_\eee,
\end{equation}
and define inner and outer curves $\Gi=\Gamma_{\!r_\iii}$ and $\Ge=\Gamma_{\!r_\eee}$ bounding the regions $\Omi=\Omega_{r_\iii}$ and $\Ome=\Omega_{r_\eee}$, that~is,
\begin{equation}
  \Gi \;=\; \{ \Psi(w)  : |w|=r_\iii \}, \quad   \Ge \;=\; \{ \Psi(w)  : |w|=r_\eee \}.
\end{equation}
Their union $\Gi\cup\Ge$ is the boundary of a distorted annulus in the $z$-plane,
\begin{equation}
  A \;=\; \{ \Psi(w) : r_\iii < |w| < r_\eee \}.
\end{equation}
 We are interested in the thin-domain limit $r_\iii\to r_\eee$, or $r\nearrow1$, where
\beq\label{ratio}
r = \frac{r_\iii}{r_\eee}.
\eeq
An example is shown in Fig.~\ref{doublyctd}.
\begin{figure}[H]
\begin{minipage}{.45\textwidth}
\centering\includegraphics[scale=0.45, trim = 0cm 4cm 5cm 3cm, clip]{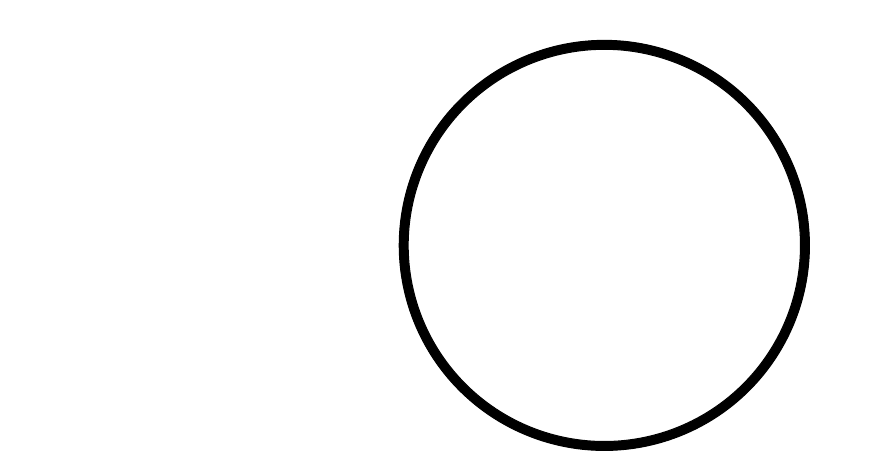}
\end{minipage}%
\begin{minipage}{.1\textwidth}
\centering$\underrightarrow{\quad\Psi\quad}$
\end{minipage}%
\begin{minipage}{.45\textwidth}
\centering\includegraphics[scale=0.45, trim = 5cm 4cm 0cm 3cm, clip]{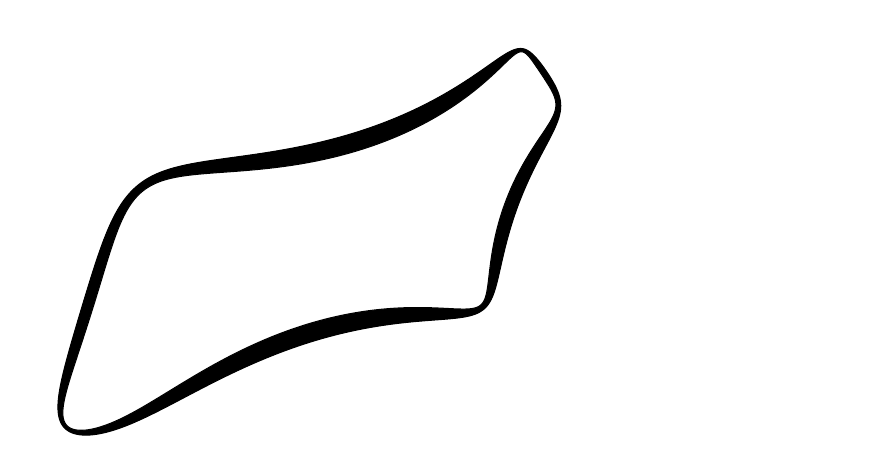}
\end{minipage}%
\caption{\small The image of a circular annulus under a conformal mapping is the thin distorted annulus.
The conformal mapping is 
$
z=\Psi(w) = w + \frac{0.3+0.5i}{w} - \frac{0.2+0.1i}{w^3} + \frac{0.1i}{w^5} + \frac{0.05i}{w^6} + \frac{0.01i}{w^7}
$
and the inner and outer radii are $r_\iii = 1.1$ and $r_\eee = 1.15$.}
\label{doublyctd}
\end{figure}

Consider the single-layer potential and the Neumann-Poincar\'e operator for the doubly connected domain~$A$.  Let the normal vector $\nu_\iii$ on $\Gi$ be taken to point into the exterior domain to~$\Gi$, and let $\nu_\eee$ on $\Ge$ point into the exterior domain to~$\Ge$.  For any density $\phi$ on $\dA$, write $\phi=\phi_\iii+\phi_\eee$, where $\phi_\iii$ is supported on $\Gi$ and $\phi_\eee$ on $\Ge$.  The single-layer potential of $\dA$ is a~sum
\begin{equation}
  \mathcal{S}_\dA[\phi] \;=\; \mathcal{S}_{\Gi}[\phi_\iii] + \mathcal{S}_{\Ge}[\phi_\eee]
\end{equation}
and consequently
\begin{align}
  \frac{\partial}{\partial\nu_\iii} \mathcal{S}_\dA[\phi]\big|^\pm_\Gi(x) &\;=\; 
           \pm\half \phi_\iii(x) + \mathcal{K}_{\Gi}^*[\phi_\iii] + \frac{\partial}{\partial\nu_\iii}\mathcal{S}_\Ge[\phi_\eee],
            \qquad x\in\Gi \\
  \frac{\partial}{\partial\nu_\eee} \mathcal{S}_\dA[\phi]\big|^\pm_\Ge(x) &\;=\;   
          \pm\half \phi_\eee(x) + \mathcal{K}_{\Ge}^*[\phi_\eee] + \frac{\partial}{\partial\nu_\eee}\mathcal{S}_\Ge[\phi_\iii],
  \qquad x\in\Ge.
\end{align}
Since the outward normal vector $\nu$ to $A$ on $\dA=\Gi\cup\Ge$ is $\nu=\nu_\eee$ on $\Ge$ and $\nu=-\nu_\iii$ on~$\Gi$, one obtains
\begin{equation*}
  \frac{\p}{\p\nu}\col{1.5}{\mathcal{S}_\dA[\phi]_\iii}{\mathcal{S}_\dA[\phi]_\eee}^\pm
  \;=\;
  \col{1.5}{-\frac{\p}{\p\nu_\iii}\mathcal{S}_\dA[\phi]\big|^\mp_\Gi}{\frac{\p}{\p\nu_\eee}\mathcal{S}_\dA[\phi]\big|^\pm_\Ge}
  \;=\;
  \col{1.5}{\pm\half\phi - \mathcal{K}^*_\Gi[\phi_\iii] - \frac{\p}{\p\nu_\iii}\mathcal{S}_\Ge[\phi_\eee]}{\pm\half\phi + \mathcal{K}^*_\Ge[\phi_\eee] + \frac{\p}{\p\nu_\eee}\mathcal{S}_\Gi[\phi_\iii]}.
\end{equation*}
In view of (\ref{eqn:Kstarjump}) therefore, with respect to the decomposition of fields on $\Ge$ and $\Gi$, the NP operator  attains the block form
\beq\label{NPdoubly}
\mathbb{K}_\dA^* =
\begin{bmatrix}
-\mathcal{K}_{\Gi}^* & -\dfrac{\p \mathcal{S}_{\Ge}}{\p \nu_\iii}\\[2mm]
\dfrac{\p \mathcal{S}_{\Gi}}{\p \nu_\eee} & \mathcal{K}_{\Ge}^*
\end{bmatrix}.
\eeq

\section{Series expansion via Grunsky coefficients}

Following \cite{Jung:2021:SEL}, this section describes how to use the Grunsky coefficients to transform the NP operator into an infinite matrix system.

Associated with the conformal mapping $z\!=\!\Psi(w)$ are the Faber polynomials $\{F_n(z)\}_{n=0}^\infty$, introduced by G. Faber \cite{Faber:1903:UPE}.  They are determined uniquely by the property that $F_n(z)-w^n = O(w^{-1})$ as $w\to\infty$, that is, there are numbers $\{c_{nm}\}_{m,n\geq1}$, known as the Grunsky coefficients, such that
\begin{equation} \label{eqn:Faberdefinition}
F_n(z)\;=\;w^n+\sum_{m=1}^{\infty}c_{nm}{w^{-m}}
\end{equation}
under the correspondence $z\!=\!\Psi(w)$, for $z\in\overline{\Omega}^c$, that is $|w|>\gamma$.  $F_n(z)$ is monic of degree~$n$, and these polynomials can be obtained recursively using the series expansion of~$\Psi$.    The first three are 
$F_0(z)=1,\ F_1(z)=z-a_0,\ F_2(z)=z^2-2a_0 z+a_0^2-2a_1.$
The Grunsky coefficients enjoy the recursion
\begin{align}\label{grunsky}
c_{n,m+1} = c_{n+1,m} - a_{n+m} + \sum_{s=1}^{n-1} a_{n-s}c_{sm} - \sum_{s=1}^{m-1} a_{m-s}c_{ns}, \quad  n,m\ge 1
\end{align}
%
with initial values $c_{1m} = a_m$ and $c_{m1} = ma_m$, $m\ge1$. 
Using a generating function for the Faber polynomials, one derives the relation
\begin{equation}
  mc_{nm}=nc_{mn}.
\end{equation}

The coefficients satisfy the {\it strong Grunsky inequalities} \cite{Duren:1983:UF, Grunsky:1939:KSA}: let $N\in\NN$ and $\{\lambda_k\}_{k=1}^N$ be nonzero complex numbers and it follows that
\beq\label{inequal:strong}
\sum_{n=1}^\infty n\left|\sum_{k=1}^N\frac{c_{kn}}{\gamma^{k+n}}\lambda_k \right|^2\leq\sum_{n=1}^N n\left|\lambda_n \right|^2,
\eeq
where the equality holds if and only if $\Omega$ is of measure zero. For a fixed $m$,  choose $\lambda_k=\frac{1}{\sqrt{m}}\delta_{mk}$ in \eqnref{inequal:strong} to get 
\beq\label{Gineq}
\sum_{n=1}^\infty \left|\sqrt{\frac{n}{m}}\frac{c_{mn}}{\gamma^{m+n}}\right|^2\leq 1.
\eeq

%
%

Jung and Lim~\cite{Jung:2021:SEL} developed an explicit infinite matrix formulation of the NP operator for simply connected domains by using the Grunsky coefficients.  This formulation has been used in studies of shape recovery and neutral inclusions and in spectral analysis~\cite{Choi:2023:IPP, Choi:2021:ASR, Choi:2023:GME,Choi:2021:EEC, Ji:2022:SPN, Jung:2020:DEE}.  Below, we use it to analyze the spectrum of the NP operator for the distorted annular region~$A$.

We now give a brief description of the development in~\cite{Jung:2021:SEL}.
In the conformal image of~$\Psi$, namely $\Omext=\left\{ z=\Psi(w) : |w|>\gamma \right\}$, the equation $z\!=\!\Psi(w)$ defines $w\!=\!e^{\rho+i\theta}$ as a function of~$z$, and therefore $\rho\!>\!\log\gamma\in\RR$ and $\theta\in\RR/2\pi\ZZ$ act as global coordinates on $\Omext$.
Define $\rho_\iii$ and $\phi_\eee$ by $r_\iii=e^{\rho_\iii}$ and $r_\eee=e^{\rho_\eee}$.  
For $\alpha\!\in\!\{\iii,\eee\}$, the coordinate~$\theta$ parameterizes the curve $\Ga\!=\!\{z:\rho\equiv\rho_\alpha\}$ and the smooth functions $e^{in\theta}$ form a Riesz basis for $L^2(\Ga)$.
At the point $z\in\Ga$ corresponding to $(\rho,\theta)$, differentiation in the direction of the outward normal vector is given~by
\begin{equation}
  \frac{\p}{\p\nu} \;=\; \frac{1}{h(\rho,\theta)}\frac{\p}{\p\rho},
\end{equation}
in which $h$ is the Jacobian of~$\Psi$,
\[
h(\rho,\theta) = \left| \frac{\p}{\p \rho} \Psi(e^{\rho+i\theta}) \right| =  \left| \frac{\p}{\p \theta} \Psi(e^{\rho+i\theta})\right| = e^\rho | \Psi'(w) |.
\]

For any integer $n\geq1$, the following function $H^\alpha_n(z)$ is harmonic for $z\in\Omega_\alpha$ and for $z\in\overline{\Omega}_\alpha^c$ and decays as $z^{-1}$ at infinity.  With the identification $z=\Psi(w)$ and $w=e^{\rho+i\theta}$ in the exterior region $\overline{\Omega}^c$, define
\begin{equation}\label{Hn1}
  H^\alpha_n(z) \;:=\;
\renewcommand{\arraystretch}{1.5}
\left\{
\begin{array}{rcll}
  F_n(z), & & z\in\overline\Omega_\alpha \\
  \tilde F_n^\alpha(w) &:=\; e^{2n\rho_\alpha}\bar w^{-n} + \sum_{m=1}^\infty c_{nm}w^{-m}, &z\in\Omega_\alpha^c.
\end{array}
\right.
\end{equation}
Importantly, in the region $\overline{\Omega}^c$, which contains the boundary $\Ga$ of $\Omega_\alpha$,
\begin{equation}\label{Hn2}
  F_n(z) \;=\; w^n + \sum_{m=1}^\infty c_{nm}w^{-m}, \qquad z\in\overline\Omega_\alpha\!\setminus\!\Omega.
\end{equation}

The factor $e^{2n\rho_\alpha}$ is placed in $\tilde F_n^\alpha(w)$ so that $F_n$ and $\tilde F_n^\alpha$ coincide on $\Ga$.  Thus, $H^\alpha_n$ is the single-layer potential for the density equal to the jump of $\p H^\alpha_n/\p\nu$ across $\Ga$,
\begin{equation}\label{Hnjump}
  \left[ \frac{\p H^\alpha_n}{\p\nu} \right]_\Ga \;=\; -2n\, r_\alpha^n\frac{e^{in\theta}}{h(\rho_\alpha,\theta)}.
\end{equation}
The average of $\p H^\alpha_n/\p\nu$ across $\Ga$ is
\begin{equation}\label{Hnaverage}
  \left\langle \frac{\p H^\alpha_n}{\p\nu} \right\rangle_{\!\Ga} \;=\; -\sum_{m=1}^\infty m c_{nm} r_\alpha^{-m} \frac{e^{-im\theta}}{h(\rho_\alpha,\theta)}.
\end{equation}
For each $n\in\ZZ$, define the density function
\begin{equation*}
  \phi_n^{\alpha}(z) = \frac{e^{in\theta}}{h(\rho_\alpha,\theta)},
  \qquad z\in\Ga.
\end{equation*}

According to \cite{Jung:2021:SEL}, we can express a Hilbert space where the NP operator is acting on. Let $K^{-1/2}(\p \Gamma_\alpha)$ be a Hilbert space isometric to $l^2$ space where any complex sequence $a=(a_m)\in l^2$ corresponds to the functional $f_a\in K^{-1/2}(\p \Gamma_\alpha)$ given by
$$
f_a\left(\sum_{m\in\ZZ} b_m \phi_m^\alpha \right) = \sum_{m\in\ZZ} a_m \overline{b_m}.
$$
Thus, we represent $K^{-1/2}(\p \Gamma_\alpha)$ by
$$
K^{-1/2}(\p \Gamma_\alpha) = \left\{ \sum_{m\in\ZZ} a_m \phi_m^\alpha : \sum_{m\in\ZZ} |a_m|^2 <\infty \right\}.
$$ 
We can identify the operator $K^*_\Ga : K^{-1/2}(\p \Gamma_\alpha) \to K^{-1/2}(\p \Gamma_\alpha)$ with an infinite matrix on $\ell^2(\ZZ)$, namely,
$$
\left[K^*_\Ga \right]: \ell^2 \to \ell^2.
$$
We refer the readers to \cite{Jung:2021:SEL} for more detailed information of the spaces. We basically use this idea to find an infinite-matrix form of the integral operators in this paper.

In light of (\ref{Kgeom}), (\ref{Hnjump}), (\ref{Hnaverage}), and the relation $mc_{nm}=nc_{mn}$, one obtains, for $n\geq1$,
\begin{equation}\label{K2}
  K^*_\Ga\left[ \phi_n^\alpha \right] \;=\; \frac{1}{2} \sum_{m=1}^\infty \frac{c_{mn}}{r_\alpha^{m+n}} \phi_{-m}^\alpha,
\end{equation}
and by conjugation,
\begin{equation}\label{K3}
  K^*_\Ga\left[ \phi_{-n}^\alpha \right] \;=\; \frac{1}{2} \sum_{m=1}^\infty \frac{\overline{c_{mn}}}{r_\alpha^{m+n}} \phi_{m}^\alpha.
\end{equation}
%
The density of the single-layer potential that is constant in $\Omega_\alpha$ and equal to $\log|w|$ in $\overline{\Omega}_\alpha^c$ is $\phi_0^\alpha$, and therefore
\begin{align}\label{K1}
\mathcal{K}^*_{\Ga}[\phi_0^\alpha] = \frac{1}{2} \phi_{0}^\alpha.
\end{align}
%

To obtain the normal derivatives of the single-layer potentials on $\Gi$ and $\Ge$, use the second equation of (\ref{Hn1}) in the exterior region and (\ref{Hn2}) in the interior region and $\p/\p\nu=h^{-1}\p/\p\rho$.  With the observation that, for $n\geq1$,
\begin{equation}
  S_{\Ga}[\phi^\alpha_n] \;=\; -\frac{H^\alpha_n}{2nr^n_\alpha}, \qquad
  S_{\Ga}[\phi^\alpha_{-n}] \;=\; -\frac{\overline{H^\alpha_n}}{2nr^n_\alpha},  \qquad
  S_{\Ga}[\phi^\alpha_0] \;=\; \log|w| = \rho,
\end{equation}
one can compute
\begin{align}\label{S1}
\frac{\p \mathcal{S}_\Gi}{\p \nu_\eee}[\phi_0^\iii] = \phi_0^\eee, \quad \frac{\p \mathcal{S}_\Ge}{\p \nu_\iii}[\phi_0^\eee] = 0,
\end{align}
and, for $n\geq1$,
\begin{align}
\frac{\p \mathcal{S}_\Gi}{\p \nu_\eee}[\phi_n^\iii] &\;=\; \frac{1}{2} \Big(\frac{r_\iii}{r_\eee}\Big)^{\!\!n} \phi_n^\eee + \frac{1}{2} \sum_{m=1}^\infty \frac{c_{mn}}{r_\eee^m r_\iii^n} \phi_{-m}^\eee,\label{S2}\\
\frac{\p \mathcal{S}_\Gi}{\p \nu_\eee}[\phi_{-n}^\iii] &\;=\; \frac{1}{2} \Big(\frac{r_\iii}{r_\eee}\Big)^{\!\!n} \phi_{-n}^\eee + \frac{1}{2} \sum_{m=1}^\infty \frac{\overline{c_{mn}}}{r_\eee^m r_\iii^n} \phi_{m}^\eee,\label{S3}\\
\frac{\p \mathcal{S}_\Ge}{\p \nu_\iii}[\phi_n^\eee] &\;=\; - \frac{1}{2} \Big(\frac{r_\iii}{r_\eee}\Big)^{\!\!n} \phi_n^\iii + \frac{1}{2} \sum_{m=1}^\infty \frac{c_{mn}}{r_\iii^m r_\eee^n} \phi_{-m}^\iii, \label{S4}\\
\frac{\p \mathcal{S}_\Ge}{\p \nu_\iii}[\phi_{-n}^\eee] &\;=\; - \frac{1}{2} \Big(\frac{r_\iii}{r_\eee}\Big)^{\!\!n} \phi_{-n}^\iii + \frac{1}{2} \sum_{m=1}^\infty \frac{\overline{c_{mn}}}{r_\iii^m r_\eee^n} \phi_{m}^\iii.\label{S5}
\end{align}
Define the following semi-infinite matrices:
\begin{equation}
C
=
\begin{bmatrix}
\ c_{11} & c_{12} & c_{13}& \cdots\\[1mm]
\ c_{21} & c_{22} & c_{23} & \cdots\\[1mm]
\ c_{31} & c_{32} & c_{33} & \cdots\\
\ \vdots & \vdots & \vdots & \ddots
\end{bmatrix},
\qquad
G
=
\begin{bmatrix}
\ g_{11} & g_{12} & g_{13}& \cdots\\[1mm]
\ g_{21} & g_{22} & g_{23} & \cdots\\[1mm]
\ g_{31} & g_{32} & g_{33} & \cdots\\
\ \vdots & \vdots & \vdots & \ddots
\end{bmatrix},
\end{equation}
where the components of $G$ are 
$$g_{mn} = \frac{c_{mn}}{r_\iii^{m+n}},$$
and $S:\NN\to-\NN$ and $S^*:-\NN\to\NN$ are
\begin{equation}\label{mat:S}
S=
\begin{bmatrix}
\ \vdots & \vdots & \vdots & \reflectbox{$\ddots$} \\[1mm]
\ 0 & 0 & 1 & \cdots\\[1mm]
\ 0 & 1 & 0 & \cdots\\[1mm]
\ 1 & 0 & 0 & \cdots
\end{bmatrix},
\qquad
S^*=
\begin{bmatrix}
\ \hdots & 0 & 0 & 1 \\[1mm]
\ \hdots & 0 & 1 & 0\\[1mm]
\ \hdots & 1 & 0 & 0\\[1mm]
\ \reflectbox{$\ddots$} & \vdots & \vdots & \vdots
\end{bmatrix}.
\end{equation}
Note that $S^*S$ is the identity on $\NN$ and $SS^*$ is the identity on $-\NN$.
Define the diagonal semi-infinite matrices
\begin{equation}
r^{\NN}
=
\left[
\begin{array}{ccccccc}
 r & 0  & 0  & \hdots \\[1mm]
0 & r^{2} & 0 & \hdots \\[1mm]
0 & 0 & r^{3} & \hdots \\
\vdots & \vdots & \vdots & \ddots
\end{array} 
\right], \quad
r^{2\NN}
=
\left[
\begin{array}{ccccccc}
 r^{2} & 0  & 0  & \hdots \\[1mm]
0 & r^{4} & 0 & \hdots \\[1mm]
0 & 0 & r^{6} & \hdots \\
\vdots & \vdots & \vdots & \ddots
\end{array} 
\right]
\end{equation}
and the infinite matrices $\mathcal{C}\in \CC^{\mathbb{Z}\times\mathbb{Z}}$,
\begin{align}
\mathcal{C}
=
\left[
\begin{array}{@{}*7r}
& & &  & \vdots & & \reflectbox{$\ddots$} \\
& 0 & 0 & 0 & c_{21} & c_{22} &\\[1mm]
& 0 & 0 & 0 & c_{11} & c_{12} & \hdots \\[1mm]
 & 0 & 0 & 1 & 0 & 0 &   \\[1mm]
\hdots & \overline{c_{12}} & \overline{c_{11}} & 0 & 0 & 0 & \\[1mm]
& \overline{c_{22}} & \overline{c_{21}} & 0 & 0 & 0 &\\
\reflectbox{$\ddots$} & & \vdots &  & & &
\end{array}
\right]
=
\begin{bmatrix}
  &  &  SC\\[1mm]
  & 1 &  \\[1mm]
 \overline{C}S^* &  &  \\
\end{bmatrix}\label{mathcalC}
\end{align}
and
\begin{align}
r^{|\ZZ|}
&:=
\left[
\begin{array}{ccccccc}
\ddots & & &  & & & \reflectbox{$\ddots$} \\
& r^2 & 0  & 0  & 0  & 0  &  \\[1mm]
& 0 & r & 0  & 0  & 0  & \\[1mm]
& 0 & 0 & 1 & 0 & 0 &   \\[1mm]
& 0 & 0 & 0 & r & 0 &  \\[1mm]
& 0 & 0 & 0 & 0 & r^2 &\\
\reflectbox{$\ddots$} & & &  & & & \ddots
\end{array} 
\right]
=
\begin{bmatrix}
 S r^{\NN} S^* &  &  \\[1mm]
  & 1 &  \\[1mm]
  &  & r^{\NN} \\
\end{bmatrix}.\label{rZ}
\end{align}
Moreover, we set $\mathcal{R}_\alpha := r_\alpha^{|\ZZ|} (\alpha\in\{\iii,\eee\})$ as an infinite matrix by replacing $r$  in \eqnref{rZ} with $r_\alpha$ and then define
\beq\label{mathcalG}
\mathcal{G} := \mathcal{R}_\alpha^{-1} \mathcal{C} \mathcal{R}_\alpha^{-1}.
\eeq
Define an infinite matrix and a row vector function, $\Phi_\alpha$ ($\alpha\in\{\iii,\eee\}$), 
$$
\Phi_\alpha=
\left[
\begin{array}{ccccccc}
\hdots & \phi^\alpha_{-2}  & \phi^\alpha_{-1} & \phi^\alpha_{0} & \phi^\alpha_{1}  & \phi^\alpha_{2}  & \hdots
\end{array}
\right]
$$
Let $\Phi$ be an eigenfunction of the operator $\mathbb{K}_\dA^*$ defined by
$$
\Phi :=
\left[
\begin{array}{c}
\displaystyle\sum_{n\in\ZZ} a_n^\iii \phi_n^\iii \\[5mm]
\displaystyle\sum_{n\in\ZZ} a_n^\eee \phi_n^\eee
\end{array} 
\right].
$$
Denote by $A^\alpha$ $(\alpha\in\{\iii,\eee\})$ the infinite column vector
$$
A^\alpha = \left[
\begin{array}{ccccccc}
\hdots & a^\alpha_{-2}  & a^\alpha_{-1} & a^\alpha_{0} & a^\alpha_{1}  & a^\alpha_{2}  & \hdots
\end{array}
\right]^T.
$$
Then we obtain
\begin{align*}
\mathbb{K}_\dA^* \Phi
&=
\begin{bmatrix}
\displaystyle - \sum_{n\in\ZZ} \left( a_n^\iii \mathcal{K}_\Gi^*[\phi_n^\iii] + a_n^\eee \dfrac{\p \mathcal{S}_\Ge}{\p \nu_\iii}[\phi_n^\eee] \right)\\[2mm]
\displaystyle\sum_{n\in\ZZ} \left(a_n^\iii \dfrac{\p \mathcal{S}_\Gi}{\p \nu_\eee}[\phi_n^\iii] + a_n^\eee  \mathcal{K}_\Ge^*[\phi_n^\eee] \right)
\end{bmatrix}\\
&=\begin{bmatrix}
\displaystyle - \sum_{m\in\ZZ} \sum_{n\in\ZZ} \phi_m^\iii \left(\left[\mathcal{K}_\Gi^*\right]_{mn} a_n^\iii + \left[\dfrac{\p \mathcal{S}_\Ge}{\p \nu_\iii}\right]_{mn} a_n^\eee\right)  \\[2mm]
\displaystyle\sum_{m\in\ZZ} \sum_{n\in\ZZ} \phi_m^e \left(\left[\dfrac{\p \mathcal{S}_\Gi}{\p \nu_\eee}\right]_{mn} a_n^\iii + \left[\mathcal{K}_\Ge^*\right]_{mn} a_n^\eee\right)  
\end{bmatrix}\\
&=\left[
\begin{array}{c|c}
\Phi_\iii & \Phi_\eee
\end{array}
\right]
\left[
\begin{array}{c|c}
-\left[\mathcal{K}_\Gi^*\right] & -\left[\dfrac{\p \mathcal{S}_\Ge}{\p \nu_\iii}\right] \\[3mm]
\hline\rule{0pt}{4ex}
\left[\dfrac{\p \mathcal{S}_\Gi}{\p \nu_\eee}\right] & \left[\mathcal{K}_\Ge^*\right]
\end{array}
\right]
\left[\begin{array}{c}
A^\iii  \\
\hline\rule{0pt}{2.5ex}
A^\eee
\end{array}\right],
\end{align*}
%
in which the bracket notation in the blocks denotes the matrices for the operators with respect to the basis functions $\phi^{\iii,\eee}_n$.  We deduce from \eqnref{K2}--\eqnref{K1} that
\begin{align*}
&\left[\mathcal{K}_\Gi^*\right] = \frac{1}{2} \mathcal{R}_\iii^{-1} \mathcal{C} \mathcal{R}_\iii^{-1} \\
&\left[\mathcal{K}_\Ge^*\right] = \frac{1}{2} \mathcal{R}_\eee^{-1} \mathcal{C} \mathcal{R}_\eee^{-1}.
\end{align*}
Similarly, from \eqnref{S1}--\eqnref{S5} we obtain
\begin{align*}
\left[\dfrac{\p \mathcal{S}_\Gi}{\p \nu_\eee}\right] &\;=\; \frac{1}{2} \mathcal{R}_\iii \mathcal{R}_\eee^{-1} + \frac{1}{2} \mathcal{R}_\eee^{-1} \mathcal{C} \mathcal{R}_\iii^{-1}\\
\left[\dfrac{\p \mathcal{S}_\Ge}{\p \nu_\iii}\right] &\;=\; -\frac{1}{2} \mathcal{R}_\iii \mathcal{R}_\eee^{-1} + \frac{1}{2} \mathcal{R}_\iii^{-1} \mathcal{C} \mathcal{R}_\eee^{-1}.
\end{align*}
%
Then, we have the block matrix equation
\begin{align}\label{KPhi}
\mathbb{K}_\dA^* \Phi
=\left[
\begin{array}{c|c}
\Phi_\iii & 0 \\
\hline\rule{0pt}{2.5ex}
0 & \Phi_\eee
\end{array}
\right]
\big[\mathbb{K}_\dA^*\big]
\left[\begin{array}{c}
A^\iii  \\
\hline\rule{0pt}{2.5ex}
A^\eee
\end{array}\right],
\end{align}
where $\big[\mathbb{K}_\dA^*\big]$ is a block matrix form of $\mathbb{K}_\dA^*$ given by
\begin{align}
\big[\mathbb{K}_\dA^*\big]
&:= \frac{1}{2}
\left[
\begin{array}{c|c}
- \mathcal{R}_\iii^{-1} \mathcal{C} \mathcal{R}_\iii^{-1}  & \mathcal{R}_\iii \mathcal{R}_\eee^{-1} - \mathcal{R}_\iii^{-1} \mathcal{C} \mathcal{R}_\eee^{-1}  \\
\hline\rule[-1ex]{0pt}{4ex}
\mathcal{R}_\iii \mathcal{R}_\eee^{-1} + \mathcal{R}_\eee^{-1} \mathcal{C} \mathcal{R}_\iii^{-1}  &  \mathcal{R}_\eee^{-1} \mathcal{C} \mathcal{R}_\eee^{-1}
\end{array}
\right]\notag
\\
&= \frac{1}{2}
\left[
\begin{array}{c|c}
- \mathcal{G} & r^{|\ZZ|} - \mathcal{G} r^{|\ZZ|}  \\
\hline\rule[-1ex]{0pt}{4ex}
r^{|\ZZ|} + r^{|\ZZ|} \mathcal{G}  & r^{|\ZZ|} \mathcal{G} r^{|\ZZ|}
\end{array}
\right].\label{Kmatrix}
\end{align}

\section{Spectral analysis of the Neumann-Poincar\'e operator}




\begin{lemma}\label{injectivity}
For given $\lambda\in(-1/2,1/2)$, the operator $\left(\lambda I - \big[\mathbb{K}_\dA^*\big]\right)$ is injective if and only if $\left(\lambda^2 I - \mathcal{B}\right)$ is injective, where $\mathcal{B}$ is an operator defined by
\begin{align}\label{altmatrix}
\mathcal{B} := \frac{1}{4}
\left[
\begin{array}{c:c}
r^{2\NN} & -G(I - r^{2\NN}) \\[1mm]\hdashline\rule[-1ex]{0pt}{4ex}
-\overline{G} (I - r^{2\NN}) r^{2\NN} & r^{2\NN} + \overline{G} (I - r^{2\NN}) G (I - r^{2\NN}) \\
\end{array}
\right]
\end{align}
and any type of the identity matrix is denoted by $I$.
\end{lemma}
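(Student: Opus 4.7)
The plan is to establish an explicit linear bijection between $\ker(\lambda I - [\mathbb{K}_\dA^*])$ and $\ker(\lambda^2 I - \mathcal{B})$ by exploiting the ``anti-diagonal'' block structure of $\mathcal{G}$ with respect to the splitting $\ell^2(\ZZ) = \ell^2(-\NN) \oplus \CC \oplus \ell^2(\NN)$. After identifying $\ell^2(-\NN) \cong \ell^2(\NN)$ by reversing the index, $\mathcal{G}$ carries the positive-index component to the negative-index component through $G$ and back through $\overline G$, while acting as the identity on the zero-index component; meanwhile $r^{|\ZZ|}$ is block-diagonal with $r^{\NN}$ on both $\pm$ parts. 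This symmetry naturally reduces the $\ZZ$-indexed eigenvalue problem in $\lambda$ to an $\NN$-indexed problem in $\lambda^2$.

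Given any $(A^\iii, A^\eee) \in \ker(\lambda I - [\mathbb{K}_\dA^*])$, the zero-index rows of the eigenvalue equation give $(2\lambda + 1)A^\iii_0 = 0$ and $(2\lambda - 1)A^\eee_0 = 0$, which for $\lambda \in (-1/2, 1/2)$ force $A^\iii_0 = A^\eee_0 = 0$. Writing $A^\iii = (u_+, 0, u_-)$ and $A^\eee = (v_+, 0, v_-)$ with $u_\pm, v_\pm \in \ell^2(\NN)$, the remaining equations reduce to four coupled equations on $\ell^2(\NN)^{4}$, which I label $(1\pm)$ and $(2\pm)$ according to whether they come from the top or bottom block row of $[\mathbb{K}_\dA^*]$ in \eqnref{Kmatrix}. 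Taking the combinations $r^{\NN}\cdot(1\pm) - (2\pm)$ yields the decoupled relations $(2\lambda - 1)r^{\NN}u_\pm = (r^{2\NN} - 2\lambda)v_\pm$. Introduce auxiliary variables $\tilde u_\pm$ satisfying $u_\pm = (r^{2\NN} - 2\lambda)\tilde u_\pm$ and $v_\pm = (2\lambda - 1)r^{\NN}\tilde u_\pm$, which are automatically compatible with these relations. Substituting into the complementary combinations $r^{\NN}\cdot(1\pm) + (2\pm)$ and dividing by the invertible operator $r^{\NN}$, the residual equations simplify to
\[
r^{2\NN}\tilde u_+ - 2\lambda\,\overline G(I - r^{2\NN})\tilde u_- = 4\lambda^2\, \tilde u_+, \qquad r^{2\NN}\tilde u_- - 2\lambda\, G(I - r^{2\NN})\tilde u_+ = 4\lambda^2\, \tilde u_-.
\]
Setting $x = \tilde u_-$ and $y = 2\lambda\, \tilde u_+$, direct comparison with \eqnref{altmatrix} shows these are exactly the equations $(\lambda^2 I - \mathcal{B})(x, y)^{T} = 0$, so the assignment $(A^\iii, A^\eee) \mapsto (x,y)$ defines a linear map $\ker(\lambda I - [\mathbb{K}_\dA^*]) \to \ker(\lambda^2 I - \mathcal{B})$.

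For $\lambda \neq 0$ this map is bijective, with inverse $\tilde u_- = x$, $\tilde u_+ = y/(2\lambda)$ followed by reconstruction of $(u_\pm, v_\pm)$ through the defining formulas; nontriviality is preserved in both directions, which gives the ``iff'' for $\lambda \neq 0$. The case $\lambda = 0$ is handled directly: subtracting the two block rows of $[\mathbb{K}_\dA^*]A = 0$ gives $A^\iii = -\mathcal{R}_\iii \mathcal{R}_\eee^{-1}A^\eee$, which substituted back into the first row forces $A^\eee = 0$, and similarly $\mathcal{B}(x, y)^{T} = 0$ forces $y = 0$ (by the second block row, using injectivity of $r^{2\NN}$) and then $x = 0$, so both kernels are trivial. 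The main technical subtlety lies in handling the isolated indices $n_0 \in \NN$ (if any) at which $r^{2n_0} = 2\lambda$, where the diagonal operator $(r^{2\NN} - 2\lambda)$ has a null component: the decoupled relation forces $u_{\pm, n_0} = 0$ automatically while leaving $v_{\pm, n_0}$ free, and one defines $\tilde u_{\pm, n_0}$ through $v_{\pm, n_0} = (2\lambda - 1)r^{n_0}\tilde u_{\pm, n_0}$ in order to preserve the bijection at such values.
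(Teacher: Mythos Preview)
Your approach is essentially the same as the paper's: both reduce the $2\times2$ block eigenvalue system by the row operation ``multiply the first block row by $r^{|\ZZ|}$ and combine with the second'' to obtain the decoupled relation between $A^\iii$ and $A^\eee$, and then exploit the anti-diagonal structure of $\mathcal{G}$ on the $\pm$-index splitting to arrive at the $\mathcal{B}$-equation.  The only real organizational difference is that the paper eliminates $X_1=A^\iii$ first and then decomposes $X_2'=r^{-|\ZZ|}A^\eee$ into $(\mathbf{x}_-,x_0,\mathbf{x}_+)$, whereas you decompose first and then eliminate via the parameters $\tilde u_\pm$; your $\tilde u_\pm$ are, up to the scalar factor $(2\lambda-1)^{-1}$, exactly the paper's $\mathbf{x}_\pm$.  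Your explicit treatment of the degenerate cases $\lambda=0$ and $r^{2n_0}=2\lambda$ is more careful than the paper's, which silently assumes the relevant inverses exist.

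Two small points.  First, your combination ``$r^{\NN}\cdot(1\pm)-(2\pm)$'' should be ``$r^{\NN}\cdot(1\pm)+(2\pm)$'' (or the sign convention of your equations differs from the paper's); it is the \emph{sum} that cancels the $G$-terms and yields the purely diagonal relation $(2\lambda-1)r^{\NN}u_\pm=(r^{2\NN}-2\lambda)v_\pm$.  Second, your ``direct comparison with \eqnref{altmatrix}'' skips a step: with $x=\tilde u_-$ and $y=2\lambda\tilde u_+$, your second residual equation reads $4\lambda^2 y = r^{2\NN}y - 4\lambda^2\,\overline G(I-r^{2\NN})x$, which is not literally the second row of $4\mathcal{B}$; one must substitute the first equation $4\lambda^2 x = r^{2\NN}x - G(I-r^{2\NN})y$ into the factor $4\lambda^2 x$ to obtain it.  This is precisely the Schur-complement manipulation the paper carries out explicitly between \eqnref{yx} and \eqnref{tilde}.
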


\begin{proof}
Let $O$ denote any zero matrices.  We first prove the injectivity of $\left(\lambda I - \big[\mathbb{K}_\dA^*\big]\right)$ implies the injectivity of $\left(\lambda^2 I - \mathcal{B}\right)$.  We assume that
\beq\label{injK}
\left( \lambda I - \big[\mathbb{K}_\dA^*\big] \right) \left[
\begin{array}{c}
X_1 \\
\hline\rule[-1ex]{0pt}{4ex}
X_2
\end{array}
\right] = \left[
\begin{array}{c}
O \\
\hline\rule[-1ex]{0pt}{4ex}
O
\end{array}
\right] 
\implies \left[
\begin{array}{c}
X_1 \\
\hline\rule[-1ex]{0pt}{4ex}
X_2
\end{array}
\right] = \left[
\begin{array}{c}
O \\
\hline\rule[-1ex]{0pt}{4ex}
O
\end{array}
\right].
\eeq
By using \eqnref{Kmatrix},  we have
\begin{align}
\left( \lambda I - \big[\mathbb{K}_\dA^*\big] \right) \left[
\begin{array}{c}
X_1 \\
\hline\rule[-1ex]{0pt}{4ex}
X_2
\end{array}
\right]
&= \left[
\begin{array}{c|c}
\lambda I + \frac{1}{2}\mathcal{G} &  - \frac{1}{2}r^{|\ZZ|} + \frac{1}{2}\mathcal{G} r^{|\ZZ|}  \\
\hline\rule[-1ex]{0pt}{4ex}
-\frac{1}{2} r^{|\ZZ|} - \frac{1}{2}r^{|\ZZ|} \mathcal{G} & \lambda I - \frac{1}{2}r^{|\ZZ|} \mathcal{G} r^{|\ZZ|}
\end{array}
\right]
\left[
\begin{array}{c}
X_1 \\
\hline\rule[-1ex]{0pt}{4ex}
X_2
\end{array}
\right] \notag\\
&= \left[
\begin{array}{c}
(\lambda I + \frac{1}{2}\mathcal{G}) X_1 - \frac{1}{2}( I - \mathcal{G}) r^{|\ZZ|} X_2  \\
\hline\rule[-1ex]{0pt}{4ex}
-\frac{1}{2} r^{|\ZZ|} (I + \mathcal{G}) X_1 + (\lambda I - \frac{1}{2}r^{|\ZZ|} \mathcal{G} r^{|\ZZ|}) X_2
\end{array}
\right].\label{P}
\end{align}
Multiplying $r^{|\ZZ|}$ on the first row of \eqnref{P} and adding it to the second row gives
\begin{align}
\left[
\begin{array}{c}
(\lambda I + \frac{1}{2}\mathcal{G}) X_1 - \frac{1}{2}( I - \mathcal{G}) r^{|\ZZ|} X_2 \\
\hline\rule[-1ex]{0pt}{4ex}
(\lambda-\frac{1}{2}) r^{|\ZZ|} X_1 + ( \lambda I - \frac{1}{2} r^{2|\ZZ|} ) X_2
\end{array}
\right]
=
\left[
\begin{array}{c}
O \\
\hline\rule[-1ex]{0pt}{4ex}
O
\end{array}
\right].\label{matrixeqn1}
\end{align}
Solving \eqnref{matrixeqn1}, we can eliminate $X_1 = -(\lambda-\frac{1}{2})^{-1} \, r^{-|\ZZ|} ( \lambda I - \frac{1}{2} r^{2|\ZZ|} ) X_2$ and get
\begin{align}
O
&=\Big( \lambda I + \frac{1}{2}\mathcal{G} \Big) \Big( \lambda I - \frac{1}{2}r^{2|\ZZ|} \Big) X_2' +  \frac{1}{2}\Big(\lambda - \frac{1}{2}\Big)( I - \mathcal{G}) r^{2|\ZZ|} X_2' \notag\\
&= \left[ \lambda^2 I + \frac{\lambda}{2} \mathcal{G} \left(I - r^{2|\ZZ|} \right)  - \frac{r^{2|\ZZ|}}{4} \right] X_2',\label{X2Y2}
\end{align}
where we denote $X_2'$ by
\beq\label{X2'}
X_2' = r^{-|\ZZ|} X_2 =:
\begin{bmatrix}
\bf{x}_- \\[1mm]
x_0 \\[1mm]
\bf{x}_+
\end{bmatrix},
\eeq
and using \eqnref{mathcalC}, \eqnref{mathcalG},  \eqnref{rZ}, we obtain an equivalent relation of \eqnref{X2Y2} as
\begin{align}
&\begin{bmatrix}
\lambda^2 I -\frac{1}{4} S r^{2\NN} S^* & O & \frac{\lambda}{2}  SG(I - r^{2\NN}) \\[1mm]
O & \lambda^2-\frac{1}{4} & O \\[1mm]
\frac{\lambda}{2} \overline{G} S^* (I - S r^{2\NN} S^*)& O & \lambda^2 I -\frac{1}{4} r^{2\NN}
\end{bmatrix}
\begin{bmatrix}
\bf{x}_- \\[1mm]
x_0 \\[1mm]
\bf{x}_+
\end{bmatrix} =  O\notag \\
\iff
&\quad 
\Big(\lambda^2 - \frac{1}{4} \Big)x_0 = 0, \quad 
\begin{bmatrix}
\lambda^2 I -\frac{1}{4} S r^{2\NN} S^*& \frac{\lambda}{2}  S G(I - r^{2\NN}) \\[1mm]
\frac{\lambda}{2} \overline{G} S^* (I - S r^{2\NN} S^*) & \lambda^2 I -\frac{1}{4} r^{2\NN}
\end{bmatrix}
\begin{bmatrix}
\bf{x}_- \\[1mm]
\bf{x}_+ 
\end{bmatrix} = O.\label{yx}
\end{align}
Since $S^*S$ and $SS^*$ are identity matrices,  the second relation of \eqnref{yx} is equivalent to
\begin{align}
&
\begin{bmatrix}
\lambda^2 I -\frac{1}{4} r^{2\NN}& \frac{1}{4} G(I - r^{2\NN}) \\[1mm]
\lambda^2 \overline{G} (I - r^{2\NN}) & \lambda^2 I -\frac{1}{4} r^{2\NN} 
\end{bmatrix}
\begin{bmatrix}
S^* \bf{x}_- \\[1mm]
2\lambda \bf{x}_+ 
\end{bmatrix} = O \notag \\
\iff
&
\left(\lambda^2 \begin{bmatrix}
I & O \\[1mm]
\overline{G} (I - r^{2\NN}) & I 
\end{bmatrix}
-
\frac{1}{4}
\begin{bmatrix}
r^{2\NN}& -G(I - r^{2\NN}) \\[1mm]
O & r^{2\NN} 
\end{bmatrix}\right)
\begin{bmatrix}
S^* \bf{x}_- \\[1mm]
2\lambda \bf{x}_+ 
\end{bmatrix} = O\notag \\
\iff
&
\left(\lambda^2 I
-
\frac{1}{4}
\begin{bmatrix}
I & O \\[1mm]
\overline{G} (I - r^{2\NN}) & I 
\end{bmatrix}^{-1}
\begin{bmatrix}
r^{2\NN}& -G(I - r^{2\NN}) \\[1mm]
O & r^{2\NN} 
\end{bmatrix}\right)
\begin{bmatrix}
S^* \bf{x}_- \\[1mm]
2\lambda \bf{x}_+ 
\end{bmatrix} = O\notag \\
\iff
&
\left(\lambda^2 I
-
\frac{1}{4}
\begin{bmatrix}
I & O \\[1mm]
-\overline{G} (I - r^{2\NN}) & I 
\end{bmatrix}
\begin{bmatrix}
r^{2\NN}& -G(I - r^{2\NN}) \notag\\[1mm]
O & r^{2\NN} 
\end{bmatrix}\right)
\begin{bmatrix}
S^* \bf{x}_- \\[1mm]
2\lambda \bf{x}_+ 
\end{bmatrix} = O\\[1mm]
\iff
& \left(\lambda^2 I - \mathcal{B} \right) \begin{bmatrix}
S^* \bf{x}_-\\[1mm]
2\lambda \bf{x}_+ 
\end{bmatrix} = O.\label{tilde}
\end{align}
From the injectivity \eqnref{injK} and the notation \eqnref{X2'}, we get the injectivity of $\left(\lambda^2 I - \mathcal{B} \right)$ as follows:
$$
\left(\lambda^2 I - \mathcal{B} \right) \begin{bmatrix}
S^* \bf{x}_-\\[1mm]
2\lambda \bf{x}_+ 
\end{bmatrix} = O
\implies
\begin{bmatrix}
\bf{x}_-\\[1mm]
\bf{x}_+ 
\end{bmatrix}
=O.
$$
Since $S^*$ is invertible and $\lambda$ is a real number,  the injectivity of $\left(\lambda I - \big[\mathbb{K}_\dA^*\big]\right)$ implies those of $\left(\lambda^2 I - \mathcal{B} \right)$:
$$
\left(\lambda^2 I - \mathcal{B} \right) \widetilde{X} = O
\implies \widetilde{X}=O,
$$
where $\widetilde{X} = \begin{bmatrix}
S^* \bf{x}_-\\[1mm]
2\lambda \bf{x}_+ 
\end{bmatrix}$ and $\widetilde{X}$ spans the domain of the operator $\left(\lambda^2 I - \mathcal{B} \right)$.

By the equivalent relations \eqnref{yx} and \eqnref{tilde}, one finds that the injectivity of $\left(\lambda^2 I - \mathcal{B}\right)$ implies those of $\left(\lambda I - \big[\mathbb{K}_\dA^*\big]\right)$.  Therefore, we get the desired result.
\end{proof}

Based on Lemma \ref{injectivity}, we obtain the following theorem.

\begin{theorem}\label{eigenvalueK}
Let $\lambda\in(-1/2,1/2)$. Then $\lambda$ is an eigenvalue of $\mathbb{K}_\dA^*$ if and only if $\lambda^2$ is an eigenvalue of $\mathcal{B}$. 
\end{theorem}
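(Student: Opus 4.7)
The theorem is essentially a restatement of Lemma~\ref{injectivity} in the language of eigenvalues rather than injectivity, so the plan is to simply convert between the two formulations using the standard equivalence
\[
\mu \text{ is an eigenvalue of } T \iff (\mu I - T) \text{ is not injective.}
\]
First I would pass from $\mathbb{K}_\dA^*$ to its matrix representation $[\mathbb{K}_\dA^*]$. By the block decomposition~\eqref{KPhi}, an eigenvector $\Phi$ of $\mathbb{K}_\dA^*$ with eigenvalue~$\lambda$ corresponds exactly to a nonzero coefficient vector $[A^\iii;A^\eee]^T$ satisfying $[\mathbb{K}_\dA^*][A^\iii;A^\eee]^T = \lambda[A^\iii;A^\eee]^T$, since the basis $\{\phi_n^\alpha\}$ gives an isometric identification of $K^{-1/2}(\Gi)\oplus K^{-1/2}(\Ge)$ with $\ell^2(\ZZ)\oplus\ell^2(\ZZ)$. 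Hence $\lambda$ is an eigenvalue of $\mathbb{K}_\dA^*$ if and only if $(\lambda I - [\mathbb{K}_\dA^*])$ is not injective.

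Next, I would invoke Lemma~\ref{injectivity}: for $\lambda \in (-1/2,1/2)$, $(\lambda I - [\mathbb{K}_\dA^*])$ is injective if and only if $(\lambda^2 I - \mathcal{B})$ is injective. Taking contrapositives, $(\lambda I - [\mathbb{K}_\dA^*])$ is not injective if and only if $(\lambda^2 I - \mathcal{B})$ is not injective, which by the same equivalence means that $\lambda^2$ is an eigenvalue of $\mathcal{B}$. Chaining these two equivalences yields the theorem.

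There is essentially no obstacle here, since all the substantive work—the elimination of $X_1$ in favor of $X_2$, the invertibility of the factor $\lambda - 1/2$ (which requires $\lambda \neq 1/2$), and the block-triangular reduction to $\mathcal{B}$—has already been carried out in Lemma~\ref{injectivity}. The only points worth noting in writing the proof are (i) that the hypothesis $\lambda \in (-1/2,1/2)$ guarantees both that $\lambda - 1/2 \neq 0$ (used in the lemma to eliminate $X_1$) and that the scalar factor $2\lambda$ appearing in the vector $\widetilde X = [S^*\mathbf{x}_-;\,2\lambda\mathbf{x}_+]^T$ does not automatically vanish, so that a nonzero $[X_1;X_2]^T$ genuinely produces a nonzero $\widetilde X$ and vice versa, and (ii) that the change of variables $X_2' = r^{-|\ZZ|}X_2$ used in \eqref{X2'} is invertible on $\ell^2(\ZZ)$ since $r_\iii,r_\eee > \gamma > 0$. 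Once these points are mentioned, the theorem follows at once.
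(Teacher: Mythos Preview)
Your approach is correct and is exactly the paper's: take the contrapositive of Lemma~\ref{injectivity} and translate ``not injective'' into ``is an eigenvalue.'' One small slip in your parenthetical remarks: the hypothesis $\lambda\in(-1/2,1/2)$ does \emph{not} guarantee $2\lambda\neq0$ (take $\lambda=0$), so your point~(i) about $\widetilde X$ is misstated---but this concerns the internals of Lemma~\ref{injectivity}, which you are entitled to take as given, so it does not affect the validity of the theorem's proof.
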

\begin{proof}
The contraposition of Lemma \ref{injectivity} is that, for given $\lambda$,  the operator
$\left(\lambda I - \big[\mathbb{K}_\dA^*\big]\right)$ is not injective if and only if $\left(\lambda^2 I - \mathcal{B}\right)$ is not injective.  Thus, $\lambda$ is an eigenvalue of $\mathbb{K}_\dA^*$ if and only if $\lambda^2$ is an eigenvalue of $\mathcal{B}$.
\end{proof}

We will use the following version of the Gershgorin circle theorem.  The first statement follows from~\cite[p.\,39]{Chonchaiya:2010:CSP}.

\begin{lemma}\label{Gershgorin_type}
Let $\mathcal{B} = [b_{mn}]_{m,n\in\ZZ\setminus\{0\}}$ be the infinite matrix defined in \eqnref{altmatrix}, which operates on all spaces $\ell^p(\ZZ\setminus\{0\})$, $1\le p \le \infty$. The eigenvalues of $\mathcal{B}$ lie in the union of the Gershgorin disks, that~is,
$$
\sigma_{\mathrm{point}}(\mathcal{B}) \subseteq \bigcup_{m\in\ZZ\setminus\{0\}} \mathcal{G}_m,
$$
where $\mathcal{G}_m$ is the Gershgorin disk defined by
$$
\mathcal{G}_m = \left\{ \mu\in\CC: |\mu - b_{mm} | \le \max\Big[\sum_{{n\in\ZZ\setminus\{0\}}, n\neq m} |b_{mn}|,  \sum_{{n\in\ZZ\setminus\{0\}}, n\neq m} |b_{nm}| \Big] \right\}.
$$
For each $m\in\ZZ\setminus\{0\}$, if $\mathcal{G}_m$ is disjoint from $\mathcal{G}_n$ for all $n\neq m$, then $\mathcal{G}_m$ contains an element of $\sigma_{\mathrm{point}}(\mathcal{B})$, that is, there exist an eigenvalue $\lambda_m$ of $\mathcal{B}$ such that $\lambda_m \in \mathcal{G}_m$.
\end{lemma}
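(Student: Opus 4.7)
The plan is to treat the two claims separately. The inclusion in the first statement is the standard Gershgorin argument adapted to $\ell^p$, quoted from \cite{Chonchaiya:2010:CSP}: given $\mu\in\sigma_{\mathrm{point}}(\mathcal{B})$ with eigenvector $x=(x_n)$, pick an index $m$ at (or approaching) $\sup_n|x_n|$, read off the eigenvalue equation in row $m$ to obtain $|\mu-b_{mm}|\le\sum_{n\neq m}|b_{mn}|$, and apply the same reasoning to the transpose for the column-sum bound. The $\max$ in the definition of $\mathcal{G}_m$ accommodates both.

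For the second statement I would argue by a homotopy from the diagonal part $D$ of $\mathcal{B}$. Set $\mathcal{B}(t)=D+t(\mathcal{B}-D)$ for $t\in[0,1]$, so the Gershgorin disks $\mathcal{G}_m(t)$ of $\mathcal{B}(t)$ share centers with $\mathcal{G}_m$ but have radii scaled by $t$. The disjointness hypothesis at $t=1$ therefore propagates to every $t\in[0,1]$. At $t=0$ the scalar $b_{mm}$ is an eigenvalue of $D$ with eigenvector $e_m$, lying trivially in $\mathcal{G}_m(0)=\{b_{mm}\}\subset\mathcal{G}_m$. The goal is to transport this eigenvalue to $t=1$ without letting it leave $\mathcal{G}_m$.

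The key input is that $\mathcal{B}-D$ is compact on $\ell^2(\ZZ\setminus\{0\})$: the factor $(I-r^{2\NN})$ appearing in the off-diagonal blocks, together with the strong Grunsky inequality \eqnref{Gineq}, gives square-summability of the off-diagonal entries and hence a Hilbert--Schmidt bound. Then each $\mathcal{B}(t)$ has purely discrete spectrum off $0$, and Kato's analytic perturbation theory yields that the eigenvalue branch originating at $b_{mm}$ depends continuously on $t$. Disjointness of the disks prevents collisions with other branches, so the branch stays inside $\mathcal{G}_m(t)$; evaluating at $t=1$ yields $\lambda_m\in\mathcal{G}_m$. The main obstacle I anticipate is ensuring the eigenvalue cannot be absorbed into the essential-spectrum point $\{0\}$ of a compact operator during the homotopy: this requires the additional verification that $0\notin\mathcal{G}_m$ under the disjointness hypothesis, which is automatic once $m$ is large enough that $b_{mm}$ is bounded away from $0$, but otherwise must be handled explicitly, for instance by a Rouch\'e-type integral over a small contour around $\mathcal{G}_m$ that avoids the origin.
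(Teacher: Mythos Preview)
Your approach is essentially identical to the paper's: both cite \cite{Chonchaiya:2010:CSP} for the Gershgorin inclusion and both prove the second statement via the homotopy $\mathcal{B}_t = D + t(\mathcal{B}-D)$, observing that the disks shrink monotonically so that disjointness propagates and continuity of the eigenvalue branch traps it in $\mathcal{G}_m$. Your version is in fact more detailed than the paper's---you justify continuity through the Hilbert--Schmidt property of $\mathcal{B}-D$ and Kato perturbation theory, and you flag the essential-spectrum issue at $0$, whereas the paper simply asserts continuity of $\lambda_{m,t}$ without further comment.
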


\begin{proof}
We refer the reader to \cite{Chonchaiya:2010:CSP} for using the Gershgorin circle theorem to infinite matrices. Since non-diagonal terms of $\mathcal{B}$ decay exponentially,  we can apply \cite[p.\,39]{Chonchaiya:2010:CSP} to conclude that all eigenvalues of $\mathcal{B}$ lie within the union of the Gershgorin disks.

The second statement follows from a standard continuity argument.
Let $D$ denote the diagonal part of $\mathcal{B}$, and define $\mathcal{B}_t = t \mathcal{B} + (1-t)D$ for $t\in[0,1]$ so that $\mathcal{B}_0 = D$ and $\mathcal{B}_1 = \mathcal{B}$. The eigenvalues $\{\lambda_{m,0}\}_{m\in\ZZ\setminus\{0\}}$, of $\mathcal{B}_0$ are the diagonal elements.
Let $\mathcal{G}_{m,t}$ denote the Gershgorin disk of $\mathcal{B}_t$, centered at the eigenvalue $\lambda_{m,t}$.  Since the sets $\{\mathcal{G}_{m,1}\}$ are disjoint and each disk $\mathcal{G}_{m,t}$ shrinks monotonically to the single point $\{\lambda_{m,0}\}$ as $t\to0$, the sets $\{\mathcal{G}_{m,t}\}$, for each $t\in[0,1]$, are also disjoint.  Continuity of $\lambda_{m,t}$ in $t$ now guarantees that $\lambda_{m,t}$ must remain in the disk $\mathcal{G}_{m,t}$ for all~$t\in[0,1]$.
\end{proof}

\begin{lemma}
\label{grunsky_bound}
Let $G = (g_{mn})_{m,n=1}^\infty$ with $g_{mn} = c_{mn}/r_\iii^{m+n}$.
There exists $\rho\in[0,1)$ such that $|g_{mn}| \le \sqrt{m} \, \rho^{m+n}$ for all $m,n\in\NN$.
\end{lemma}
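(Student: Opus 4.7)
The plan is to bootstrap directly from the strong Grunsky inequality \eqnref{Gineq}, which the paper has already recorded. That inequality states
\[
\sum_{n=1}^\infty \left|\sqrt{\frac{n}{m}}\frac{c_{mn}}{\gamma^{m+n}}\right|^2 \leq 1.
\]
Since each term in a convergent sum of nonnegative reals is bounded by the sum itself, I would first extract the $n$-th term:
\[
\frac{n}{m}\frac{|c_{mn}|^2}{\gamma^{2(m+n)}} \le 1,
\]
which yields $|c_{mn}| \le \sqrt{m/n}\,\gamma^{m+n}$. Since $n\ge 1$, this implies the slightly weaker (but cleaner) bound $|c_{mn}| \le \sqrt{m}\,\gamma^{m+n}$.

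Second, I divide through by $r_\iii^{m+n}$. By the standing assumption $\gamma < r_\iii$, the ratio $\rho := \gamma/r_\iii$ satisfies $\rho \in [0,1)$, and therefore
\[
|g_{mn}| \;=\; \frac{|c_{mn}|}{r_\iii^{m+n}} \;\le\; \sqrt{m}\left(\frac{\gamma}{r_\iii}\right)^{\!m+n} \;=\; \sqrt{m}\,\rho^{m+n},
\]
which is the desired estimate.

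There is no real obstacle here: the strong Grunsky inequality does all of the analytic work, and the rest is a one-line consequence of termwise bounding plus the fact that $r_\iii$ sits strictly outside the conformal radius~$\gamma$. If one wanted a sharper bound for later estimates, one could instead choose any $\rho \in (\gamma/r_\iii,1)$ to absorb the $\sqrt{m}$ factor into a geometric decay, but the statement as given only requires the direct choice $\rho = \gamma/r_\iii$.
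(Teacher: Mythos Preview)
Your proof is correct and follows essentially the same route as the paper: extract a single term from the Grunsky inequality \eqnref{Gineq} to bound $|c_{mn}|$, then divide by $r_\iii^{m+n}$. The only cosmetic difference is that the paper applies the Grunsky inequality at radius $r_\iii-\delta$ (using that $\Psi$ is univalent there) and sets $\rho=(r_\iii-\delta)/r_\iii$, whereas you use the conformal radius $\gamma$ directly and take $\rho=\gamma/r_\iii$; your choice is in fact the sharper one.
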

\begin{proof}
$\Gamma_\iii$ is the image of a circle under the conformal map $z=\Psi(w)$ and thus has an analytic and univalent continuation to $|w|>r_\iii-\delta$ for a small enough $\delta>0$.  The inequality \eqnref{Gineq} yields
$$
 \sqrt{\frac{n}{m}} \frac{|c_{mn}|}{(r_\iii-\delta)^{m+n}} < 1,
$$
and thus
$$|g_{mn}| = \frac{|c_{mn}|}{r_\iii^{m+n}} < \sqrt{\frac{m}{n}} \frac{(r_\iii-\delta)^{m+n}}{r_\iii^{m+n}} < \sqrt{m}\, \rho^{m+n},$$
where $\rho = (r_\iii-\delta)/r_\iii$.
\end{proof}


\begin{theorem}\label{estimate}
Let $r=\frac{r_\iii}{r_\eee}<1$ and $\rho\in(0,1)$ be given, and let $\lambda$ be an eigenvalue of $\mathbb{K}_\dA^*$ with $\lambda\neq \pm\frac{1}{2}$.  Then
\begin{align*}
\lambda^2 \in \bigcup_{m=1}^\infty B(m,r),
\end{align*}
where $B(m,r)$ is a disk defined by
\begin{align*}
B(m,r) = \left\{\mu\in\left[0,1/4\right]: \left| \mu - \frac{r^{2m}}{4} \right| \le R(m,r) \right\}
\end{align*}
with a radius given by
\beq\label{rad}
R(m,r) = \sqrt{m} \, \rho^m (1-r^2) M(m,r)
\eeq
and $M :=\max(M_1^\mathrm{row}, M_1^\mathrm{col}, M_2^\mathrm{row}, M_2^\mathrm{col})$ is a bounded function of $m$ and $r$ with
\begin{align*}
&M_1^\mathrm{row}(m,r) = \frac{\rho}{4(1-\rho)(1-\rho r^2)},\\ 
&M_1^\mathrm{col}(m,r) = \frac{1}{4\sqrt{m}} \left(\frac{1-r^{2m}}{1- r^2}\right)r^{2m} \sum_{n=1}^\infty \sqrt{n} \rho^n,\\
&M_2^\mathrm{row}(m,r) = \frac{1}{4} \sum_{n=1}^\infty \rho^{n} \left(\frac{1-r^{2n}}{1- r^2}\right)\Big( r^{2n} + \sum_{k=1}^\infty \sqrt{n} \rho^{n+k} (1-r^{2k}) \Big),\\ 
&M_2^\mathrm{col}(m,r) = \frac{1}{4}\left(\frac{1-r^{2m}}{1- r^2}\right) \sum_{n=1}^\infty \sqrt{n} \, \rho^n \Big(\frac{1}{\sqrt{m}} + \sum_{k=1}^\infty  \rho^{m+k} (1-r^{2k}) \Big).
\end{align*}
\end{theorem}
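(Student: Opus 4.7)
The overall plan is to combine three ingredients already set up in the paper: the spectral reduction of Theorem \ref{eigenvalueK}, which identifies an eigenvalue $\lambda\in(-1/2,1/2)$ of $\mathbb{K}_\dA^*$ with an eigenvalue $\lambda^2$ of $\mathcal{B}$; the Gershgorin inclusion for infinite matrices from Lemma \ref{Gershgorin_type}; and the decay estimate $|g_{mn}|\le\sqrt{m}\,\rho^{m+n}$ from Lemma \ref{grunsky_bound}. Since the hypothesis is $\lambda\neq\pm1/2$ and the self-adjoint nature of $\mathbb{K}_\dA^*$ forces $\lambda$ to be real in $(-1/2,1/2]$, Theorem \ref{eigenvalueK} gives $\lambda^2\in\sigma_{\mathrm{point}}(\mathcal{B})$, and Lemma \ref{Gershgorin_type} places $\lambda^2$ in some Gershgorin disk of $\mathcal{B}$.

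The index set of $\mathcal{B}$ splits according to the two $\NN$-indexed blocks of \eqnref{altmatrix}. Block $(1,1)$ is the diagonal matrix $\tfrac14 r^{2\NN}$, so its $m$th diagonal entry is exactly $\tfrac14 r^{2m}$, matching the claimed center of $B(m,r)$ directly. Block $(2,2)$ has diagonal entry $\tfrac14 r^{2m}+\tfrac14 [\overline{G}(I-r^{2\NN})G(I-r^{2\NN})]_{mm}$, so the Grunsky-quadratic contribution is absorbed by the triangle inequality
\[
\bigl|\lambda^2-\tfrac14 r^{2m}\bigr| \;\le\; \bigl|\lambda^2-b_{mm}\bigr|+\bigl|b_{mm}-\tfrac14 r^{2m}\bigr|,
\]
so the radius around the nominal center $\tfrac14 r^{2m}$ is bounded by the Gershgorin row (or column) sum plus the modulus of that extra diagonal term.

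I then evaluate the four sums. For row $m$ of block $(1,1)$ only block $(1,2)$ contributes off-diagonal entries, giving $\tfrac14\sum_n|g_{mn}|(1-r^{2n})$. Applying Lemma \ref{grunsky_bound}, using the identity
\[
\sum_{n\ge 1}\rho^n(1-r^{2n}) \;=\; \frac{\rho(1-r^2)}{(1-\rho)(1-\rho r^2)},
\]
and factoring out $\sqrt{m}\,\rho^m(1-r^2)$ reproduces $M_1^{\mathrm{row}}$. For the column $m$ sum only block $(2,1)$ contributes, producing a factor $r^{2m}(1-r^{2m})$ which via $1-r^{2m}=(1-r^2)\tfrac{1-r^{2m}}{1-r^2}$ exposes $(1-r^2)$ and gives $M_1^{\mathrm{col}}$. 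The heavier bounds for block $(2)$ combine the off-diagonal entries of block $(2,1)$, the off-diagonal entries of block $(2,2)$, and the extra diagonal term from block $(2,2)$; bounding $[\overline{G}(I-r^{2\NN})G(I-r^{2\NN})]_{mn}$ termwise by Lemma \ref{grunsky_bound} applied twice, interchanging the order of summation, and grouping geometric series yields $M_2^{\mathrm{row}}$ and $M_2^{\mathrm{col}}$. Taking the maximum of the four quantities gives the stated radius $R(m,r)=\sqrt{m}\,\rho^m(1-r^2)M(m,r)$, and boundedness of $M(m,r)$ in $m$ and $r$ follows because each of the four series converges uniformly on $r\in(0,1)$ by the exponential decay in $\rho$.

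The main obstacle I expect is the block $(2,2)$ estimate: its off-diagonal entries are double convolutions of Grunsky coefficients through the diagonal factor $I-r^{2\NN}$, and the extra diagonal contribution has to be combined with the off-diagonal sum into a single form scaled by $\sqrt{m}\,\rho^m(1-r^2)$. This requires careful tracking of which summation index carries the $\sqrt{\,\cdot\,}$ factor from Lemma \ref{grunsky_bound}, and repeated use of $1-r^{2n}=(1-r^2)\sum_{j=0}^{n-1}r^{2j}$ to pull out the $(1-r^2)$ factor cleanly. Once the index bookkeeping is done, the remaining computations are routine geometric-series manipulations.
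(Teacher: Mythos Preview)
Your proposal is correct and follows essentially the same route as the paper: reduce via Theorem~\ref{eigenvalueK} to an eigenvalue $\lambda^2$ of $\mathcal{B}$, apply the Gershgorin inclusion of Lemma~\ref{Gershgorin_type} to the two block rows of~\eqnref{altmatrix}, shift the block-$(2,2)$ center to $r^{2m}/4$ by the triangle inequality (absorbing the diagonal Grunsky-quadratic term into the radius), and estimate all four row/column sums with Lemma~\ref{grunsky_bound} to produce the $M_i^{\mathrm{row/col}}$ expressions. The paper's proof does exactly this, with the same geometric-series identity for $M_1^{\mathrm{row}}$ and the same factoring of $(1-r^2)$ throughout.
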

\begin{proof}
Applying Lemma \ref{Gershgorin_type} on the first and the second block rows of \eqnref{altmatrix}, 
$$\bigcup_{m\in\NN,  \, i=1,2} B_i(m,r)$$
contains the spectrum of the block matrix \eqnref{altmatrix}, where $B_1(m,r)$ and $B_2(m,r)$ are denoted by
\beq\label{B1}
B_1(m,r) = \left\{ \mu\in\CC: \left| \mu - \frac{1}{4}\left[ r^{2\NN} \right]_{mm} \right| \le \max\left[R_{1}^\mathrm{row}(m,r),  R_{1}^\mathrm{col}(m,r)\right] \right\},
\eeq
with
\beq\label{B1R}
R_{1}^\mathrm{row}(m,r) = \sum_{n=1}^\infty \left| \frac{1}{4} \left[ - G(I-r^{2\NN}) \right]_{mn} \right|, \quad R_{1,m}^c = \sum_{n=1}^\infty \left| \frac{1}{4} \left[ - \overline{G}(I-r^{2\NN})r^{2\NN} \right]_{nm} \right|,
\eeq
and
\begin{align}
B_2(m,r)=
\bigg\{\mu\in\CC:
&\left| \mu - \frac{1}{4} \left[ r^{2\NN} + \overline{G} (I - r^{2\NN}) G (I - r^{2\NN}) \right]_{mm} \right|  \le \max(R_{2,m}^r,R_{2,m}^c)\bigg\},\label{B2}
\end{align}
with
\begin{align}
&R_{2}^\mathrm{row}(m,r) = \sum_{n=1}^\infty \left|  \frac{1}{4} \left[ -\overline{G} (I - r^{2\NN}) r^{2\NN} \right]_{mn} \right| +\sum_{n\neq m} \left| \frac{1}{4} \left[ \overline{G} (I - r^{2\NN}) G (I - r^{2\NN}) \right]_{mn} \right|, \label{B2Rr}\\
&R_{2}^\mathrm{col}(m,r) = \sum_{n=1}^\infty \left|  \frac{1}{4} \left[ -G (I - r^{2\NN}) \right]_{nm} \right| +\sum_{n\neq m} \left| \frac{1}{4} \left[ \overline{G} (I - r^{2\NN}) G (I - r^{2\NN}) \right]_{nm} \right|.\label{B2Rc}
\end{align}

From Lemma \ref{grunsky_bound}, the radii satisfy
\begin{align}
&R_{1}^\mathrm{row}(m,r) =  \frac{1}{4} \sum_{n=1}^\infty |g_{mn}| (1-r^{2n}) \le \frac{1}{4} \sum_{n=1}^\infty \sqrt{m} \, \rho^{m+n} (1-r^{2n}) = \sqrt{m} \, \rho^m (1-r^2) M_1^\mathrm{row}(m,r)\label{R1r}\\
&R_{1}^\mathrm{col}(m,r) = \frac{1}{4} \sum_{n=1}^\infty |g_{nm}| (1-r^{2m})r^{2m} \le \frac{1}{4} \sum_{n=1}^\infty \sqrt{n} \, \rho^{m+n} (1-r^{2m})r^{2m} = \sqrt{m} \,  \rho^{m} (1-r^2) M_1^\mathrm{col}(m,r),\label{R1c}
\end{align}
where $M_1^\mathrm{row}(m,r) = \frac{\rho}{4(1-\rho)(1-\rho r^2)}$ and $M_1^\mathrm{col}(m,r) = \frac{r^{2m}(1-r^{2m})}{4\sqrt{m} (1-r^2)}  \sum_{n=1}^\infty \sqrt{n} \rho^n$. If $\mu\in B_1(m,r)$, then  \eqnref{R1r}, \eqnref{R1c}, and \eqnref{B1} implies that
\beq\label{disk1}
\left| \mu - \frac{r^{2m}}{4} \right| \le \sqrt{m} \, \rho^m (1-r^2) \max\left[M_1^\mathrm{row}(m,r), M_1^\mathrm{col}(m,r)\right].
\eeq

The inequality in \eqnref{B2} and the triangle inequality yields 
\beq\label{B2'}
\left| \mu - \frac{r^{2m}}{4} \right| = \left| \mu - \frac{1}{4} \left[ r^{2\NN} \right]_{mm} \right|\le \max(R_2^\mathrm{row},R_2^\mathrm{col}) + \left|\frac{1}{4} \left[\overline{G} (I - r^{2\NN}) G (I - r^{2\NN}) \right]_{mm} \right|.
\eeq
Let's estimate the right-hand side of \eqnref{B2'}.
\begin{align}
& R_{2}^\mathrm{row}(m,r) + \left|\frac{1}{4} \left[\overline{G} (I - r^{2\NN}) G (I - r^{2\NN}) \right]_{mm} \right| \notag\\
&= \sum_{n=1}^\infty \left|  \frac{1}{4} \left[ -\overline{G} (I - r^{2\NN}) r^{2\NN} \right]_{mn} \right|
+ \sum_{n=1}^\infty \left| \frac{1}{4} \left[ \overline{G} (I - r^{2\NN}) G (I - r^{2\NN}) \right]_{mn} \right|\notag\\
&\le \frac{1}{4} \sum_{n=1}^\infty |g_{mn}| (1-r^{2n})\Big( r^{2n} + \sum_{k=1}^\infty |g_{nk}|(1-r^{2k}) \Big)\notag\\
&\le \frac{1}{4} \sum_{n=1}^\infty \sqrt{m} \, \rho^{m+n} (1-r^{2n})\Big( r^{2n} + \sum_{k=1}^\infty \sqrt{n} \rho^{n+k} (1-r^{2k}) \Big)\notag\\
&= \sqrt{m} \, \rho^m (1-r^2) M_2^\mathrm{row}(m,r) \label{R2r}
\end{align}
where
$$M_2^\mathrm{row}(m,r) = \frac{1}{4} \sum_{n=1}^\infty \rho^{n} (\frac{1-r^{2n}}{1-r^2})\Big( r^{2n} + \sum_{k=1}^\infty \sqrt{n} \rho^{n+k} (1-r^{2k}) \Big),$$ 
and
\begin{align}
& R_{2}^\mathrm{col}(m,r) + \left|\frac{1}{4} \left[\overline{G} (I - r^{2\NN}) G (I - r^{2\NN}) \right]_{mm} \right| \notag\\
&= \sum_{n=1}^\infty \left|  \frac{1}{4} \left[ -G (I - r^{2\NN}) \right]_{nm} \right|
+ \sum_{n=1}^\infty \left| \frac{1}{4} \left[ \overline{G} (I - r^{2\NN}) G (I - r^{2\NN}) \right]_{nm} \right|\notag\\
&\le \frac{1}{4} \sum_{n=1}^\infty |g_{nm}| (1-r^{2m})\Big( 1 + \sum_{k=1}^\infty |g_{mk}|(1-r^{2k}) \Big)\notag\\
&\le \frac{1}{4} \sum_{n=1}^\infty \sqrt{n} \, \rho^{m+n} (1-r^{2m})\Big(1 + \sum_{k=1}^\infty \sqrt{m} \rho^{m+k} (1-r^{2k}) \Big)\notag\\
&= \sqrt{m} \, \rho^m (1-r^2) M_2^\mathrm{col}(m,r),\label{R2c}
\end{align}
where $$M_2^\mathrm{col}(m,r) = \frac{1}{4}(\frac{1-r^{2m}}{1-r^2}) \sum_{n=1}^\infty \sqrt{n} \, \rho^n \Big(\frac{1}{\sqrt{m}} + \sum_{k=1}^\infty \rho^{m+k} (1-r^{2k}) \Big).$$
If $\mu\in B_2(m,r)$, then  \eqnref{R2r}, \eqnref{R2c}, and \eqnref{B2'} implies that
\beq\label{disk2}
\left| \mu - \frac{r^{2m}}{4} \right| \le \sqrt{m} \, \rho^m (1-r^2) \max\left[M_2^\mathrm{row}(m,r), M_2^\mathrm{col}(m,r)\right].
\eeq

By combining \eqnref{disk1} and \eqnref{disk2}, we get
\beq\label{UB}
\bigcup_{m\in\NN,  \, i=1,2} B_i(m,r) \subset \bigcup_{m=1}^\infty B(m,r).
\eeq
Then Lemma \ref{Gershgorin_type},  Theorem \ref{eigenvalueK}, and \eqnref{UB} give the desired result.
\end{proof}

The ball $B(m,r)$ in Theorem \ref{estimate} is disjoint from all other balls for sufficiently large~$m$.  The disjointness guarantees existence of an eigenvalues in the ball.

\begin{lemma}\label{disjoint}
Let $r$ and $\rho$ be given such that $(\frac{1+\rho}{2})^{\frac{1}{2}}< r < 1$.  There exists $m_0 = m_0(\rho) \in\NN$ such that the following properties hold for all $m\ge m_0$:
\begin{enumerate}[label=$(\alph*)$]
\item $B(m,r)$ is disjoint from $B(n,r)$ for all $n\in\NN,  \, n\neq m$. 
\item $B(m,r)$ contains at least one squared eigenvalue of $\mathbb{K}_\dA^*$.
\end{enumerate}
\end{lemma}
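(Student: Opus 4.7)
My plan is to address (a) and (b) in sequence, with (a) as the technical core and (b) a direct consequence of the Gershgorin framework already in place.

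For part (a), I would compare the inter-center gaps of the disks $B(m,r)$ with the sums of their radii. The centers $r^{2m}/4$ are strictly decreasing in $m$, so for $m<n$ the gap between centers is bounded below by $\tfrac{1}{4}r^{2m}(1-r^2)$, while the sum of the two radii is controlled by $(1-r^2)M(\sqrt{m}\rho^m+\sqrt{n}\rho^n)$, where $M$ is uniformly bounded in $m$ by Theorem \ref{estimate}. The common factor $(1-r^2)$ cancels and disjointness reduces to showing $\sqrt{m}\,(\rho/r^2)^m\to 0$ as $m\to\infty$, uniformly in $n$. The hypothesis $r^2>(1+\rho)/2$ implies $r^2>\rho$ (since $\rho<1$), hence $\rho/r^2<1$, and exponential decay dominates the polynomial factor $\sqrt{m}$. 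This yields an explicit threshold $m_0=m_0(\rho)$ beyond which every $B(m,r)$ is disjoint from every other $B(n,r)$.

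For part (b), my approach is to invoke the cluster form of the Gershgorin circle theorem. The proof of Theorem \ref{estimate} shows that the Gershgorin disks of $\mathcal{B}$ split into two families $\{B_1(m,r)\}_{m\in\NN}$ and $\{B_2(m,r)\}_{m\in\NN}$ (one per block row of $\mathcal{B}$), and both $B_1(m,r)$ and $B_2(m,r)$ are contained in $B(m,r)$. Once (a) is in hand, the union $B_1(m,r)\cup B_2(m,r)$ is isolated from all other Gershgorin disks of $\mathcal{B}$. The continuous deformation argument already used in Lemma \ref{Gershgorin_type}---interpolating between $\mathcal{B}$ and its diagonal part and tracking eigenvalues by continuity---extends naturally to clusters and forces two eigenvalues of $\mathcal{B}$ (counted with multiplicity) to remain in this isolated pair, hence in $B(m,r)$. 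Theorem \ref{eigenvalueK} then promotes at least one such eigenvalue of $\mathcal{B}$ into a squared eigenvalue of $\mathbb{K}_\dA^*$ inside $B(m,r)$.

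The main obstacle I anticipate is the bookkeeping in part (a): verifying uniform boundedness of the four functions defining $M(m,r)$ as $m\to\infty$ (here $M_1^\mathrm{col}$ and $M_2^\mathrm{col}$ decay, while $M_1^\mathrm{row}$ and $M_2^\mathrm{row}$ are $m$-independent), and handling every $n\in\NN\setminus\{m\}$ rather than only the nearest neighbors $n=m\pm 1$. A clean way is to split the argument into the cases $n>m$ and $n<m$ and exploit the eventual monotone decay of $k\mapsto\sqrt{k}\,\rho^k$. Once those technicalities are dispatched, the disjointness reduces to the clean exponential-vs-polynomial comparison sketched above, and (b) then follows essentially for free from the Gershgorin framework.
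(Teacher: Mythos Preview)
Your approach is essentially the paper's: compare the gap between centers with the sum of radii, cancel the common factor $(1-r^2)$, and let exponential decay in $m$ do the work; then derive (b) from (a) via the Gershgorin continuity argument. Two remarks are worth making.

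First, a small but genuine gap in your write-up of (a): from $r^2>(1+\rho)/2$ you only extract $\rho/r^2<1$, and then claim this yields $m_0=m_0(\rho)$. It does not. The threshold for $\sqrt{m}\,(\rho/r^2)^m<\epsilon$ depends on how close $\rho/r^2$ is to~$1$, hence on~$r$. What you need is the uniform bound $\rho/r^2<\tfrac{2\rho}{1+\rho}<1$, which follows directly from the hypothesis and is exactly the ratio the paper uses. With that correction your decay is uniform in~$r$ and $m_0$ depends only on~$\rho$, as required.

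Second, two places where your plan differs from the paper's execution. The paper does \emph{not} handle all $n\neq m$: it argues that it suffices to check the consecutive balls $B(m\pm1,r)$, since the $B(m,r)$ are intervals in $[0,1/4]$ with monotone centers, so pairwise consecutive disjointness for $m\ge m_0$ forces disjointness among all $B(m,r)$ with $m\ge m_0$. Your case-split into $n>m$ and $n<m$ works too but is more labor. Conversely, for (b) you are more careful than the paper: you correctly note that Lemma~\ref{Gershgorin_type} is stated for a single isolated Gershgorin disk, whereas here two disks (from the two block rows of~$\mathcal{B}$) sit inside $B(m,r)$, so one needs the cluster version of the deformation argument. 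The paper simply asserts ``(a) implies (b)'' via Lemma~\ref{Gershgorin_type}; your observation that the continuity argument extends to an isolated pair of disks is the right justification.
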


\begin{proof}
Using Lemma \ref{Gershgorin_type} and Theorem \ref{estimate},  we see that $(a)$ implies $(b)$.  For the proof of $(a)$,  it is enough to show that $B(m,r)$ is disjoint with the consecutive balls, that is, for some $m_0\in\NN$,
\beq\label{disjointness}
B(m,r) \cap B(m\pm 1,r)= \emptyset, \quad m\ge m_0.
\eeq
To do this, we will show that the distance between the centers of $B(m,r)$ and $B(m\pm1,r)$ is greater than the sum of the radii of the consecutive balls. The distance between the centers~is
$$
d := \left| \frac{r^{2m}}{4} - \frac{r^{2(m\pm1)}}{4} \right| = \frac{r^{2m} \left| 1 - r^{\pm2}\right| }{4}.
$$
As $r > (\frac{1+\rho}{2})^{\frac{1}{2}}$, we have
\beq\label{ineq22}
4d = r^{2m} \left| 1 - r^{\pm2}\right|  > \bigg( \frac{1+\rho}{2} \bigg)^m \left| 1 - r^{\pm2}\right|.
\eeq
From \eqnref{rad}, the sum of the radii of $B(m,r)$ and $B(m\pm1)$ satisfies
\beq\label{rrho}
R(m,r) + R(m\pm1,r) 
= (\sqrt{m} \, M(m,r) + \sqrt{m\pm1} \, \rho^{\pm1} M(m\pm 1,r)) (1-r^2) \rho^m \le C_{\rho} \sqrt{m+1} \, \rho^m (1-r^2)
\eeq
for some constant $C_\rho > 0$ independent of $r$ and $m$.  Then \eqnref{ineq22} and \eqnref{rrho} gives
\beq \label{bound}
R(m,r) + R(m\pm1,r) \le C_{\rho} \sqrt{m+1} \, \rho^m (1-r^2) <  4d \, C_\rho \sqrt{m+1} \left( \frac{2\rho}{1+\rho} \right)^m
\eeq
by the inequality
$$
\frac{1-r^2}{\left| 1 - r^{\pm2}\right|}
\le \max\left(
\frac{1-r^2}{1-r^2},
\frac{1-r^2}{r^{-2}-1}
\right)
= \max\left(1,r^2\right)
\le 1.
$$
Since $\frac{2\rho}{1+\rho} < 1$, the right--hand side of \eqnref{bound} eventually converges to zero as $m$ grows. Hence, there exist $m_0\in\NN$ such that
$$
R(m,r) + R(m\pm1,r) \le d \quad \mbox{for all }m\ge m_0.
$$
Therefore, \eqnref{disjointness} holds and it completes the proof.
\end{proof}

\begin{theorem}\label{conv_eig}
The Hausdorff distance between the spectrum of the NP operator of $A$ and $[-1/2,1/2]$ converges to zero as $r_\eee$ approaches to $r_\iii$, that is, 
$$
\lim_{r \to 1} d_H\left(\sigma(\mathbb{K}_\dA^*),[-1/2,1/2]\right) = 0.
$$
\end{theorem}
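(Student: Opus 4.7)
The plan is to bound the Hausdorff distance in both directions. The easy inclusion $\sup_{\lambda\in\sigma(\mathbb{K}_\dA^*)} d(\lambda,[-1/2,1/2])=0$ is immediate from the standard fact $\sigma(\mathbb{K}_\dA^*)\subset[-1/2,1/2]$ recalled in the introduction, so the real work is to show that $\sup_{y\in[-1/2,1/2]} d(y,\sigma(\mathbb{K}_\dA^*))\to 0$, i.e., the eigenvalues eventually fill $[-1/2,1/2]$ densely.

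For this I would first read off approximate eigenvalue locations from what has been proved. Lemma \ref{disjoint}(b) says that for every $m\ge m_0(\rho)$ and every $r$ close enough to $1$, the disk $B(m,r)$ contains a squared eigenvalue $\mu_m$ of $\mathbb{K}_\dA^*$, and by Theorem \ref{eigenvalueK} together with the twin-spectrum structure both $\pm\sqrt{\mu_m}\in\sigma(\mathbb{K}_\dA^*)$. Factoring the difference of squares and using Theorem \ref{estimate},
\[
\left|\sqrt{\mu_m}-\frac{r^m}{2}\right|\;\le\;\frac{R(m,r)}{r^m/2}\;\le\;2\sqrt{m}\,(\rho/r)^m\,M(m,r)\,(1-r^2).
\]
Because $M$ is uniformly bounded and $\rho/r<1$ as soon as $r>\rho$, the sequence $\sqrt{m}(\rho/r)^m$ is uniformly bounded in $m\ge m_0$; hence there is a constant $C_\rho$ such that, for all $m\ge m_0$ and $r$ sufficiently close to $1$, some pair of eigenvalues $\pm\lambda_m\in\sigma(\mathbb{K}_\dA^*)$ satisfies $|\lambda_m-r^m/2|\le C_\rho(1-r^2)$.

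Next I would verify that the scaffold $\mathcal{A}_r:=\{\pm r^m/2:m\ge m_0\}$ itself Hausdorff-approaches $[-1/2,1/2]$ at rate $O(1-r)$. On $[0,1/2]$ the set $\mathcal{A}_r$ is the decreasing sequence $r^{m_0}/2,r^{m_0+1}/2,\ldots$ accumulating at $0$, with consecutive gaps $r^m(1-r)/2\le(1-r)/2$ and terminal gap $1/2-r^{m_0}/2\le m_0(1-r)/2$; so every $y\in[0,1/2]$ is within $m_0(1-r)/2$ of $\mathcal{A}_r$, and the symmetric statement holds on $[-1/2,0]$. Combining with the eigenvalue-to-scaffold estimate of the previous paragraph, every $y\in[-1/2,1/2]$ lies within $m_0(1-r)/2+C_\rho(1-r^2)=O(1-r)$ of $\sigma(\mathbb{K}_\dA^*)$, giving the claimed Hausdorff convergence.

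The main obstacle is keeping the approximation error $|\sqrt{\mu_m}-r^m/2|$ uniformly small in $m$: the factor $r^{-m}$ produced by the difference-of-squares step threatens to overwhelm the decay $\rho^m$, but once $r>\rho$ (automatic in the regime $r\to 1$) the ratio $\rho/r<1$ restores exponential control. A secondary point is that $\rho$, and therefore $m_0$, is determined by the analytic continuation of $\Psi$ past $|w|=r_\iii$ and can be held fixed throughout the limit $r\to 1$, so the terminal-gap contribution $m_0(1-r)/2$ really is $o(1)$ and the scaffold argument is legitimate.
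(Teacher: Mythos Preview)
Your proof is correct and follows the same overall strategy as the paper: use Lemma~\ref{disjoint} to place (squared) eigenvalues into the Gershgorin disks $B(m,r)$, and then show that the scaffold of disk centers becomes dense in the target interval as $r\to 1$. The paper carries this out on the squared scale, proving $d_H(\sigma(\mathbb{K}_\dA^*)^2,[0,1/4])\to 0$ by an $\epsilon$--argument with two cases ($m'\ge m_1$ and $m'<m_1$) and then invoking the twin-spectrum relation; you instead pass immediately to the $\lambda$-scale via the difference-of-squares bound $|\sqrt{\mu_m}-r^m/2|\le 2R(m,r)/r^m$ and run a single clean scaffold estimate on $\{\pm r^m/2\}_{m\ge m_0}$. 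Your route has the pleasant byproduct of an explicit convergence rate $d_H\!\left(\sigma(\mathbb{K}_\dA^*),[-1/2,1/2]\right)=O(1-r)$, which the paper's $\epsilon$--$\delta$ presentation does not record. One small remark: the phrase ``$M$ is uniformly bounded'' is a slight overstatement (from the formulas in Theorem~\ref{estimate}, $M_1^{\mathrm{col}}$ and $M_2^{\mathrm{col}}$ can grow like $\sqrt{m}$ as $r\to 1$), but your argument is unaffected since $m(\rho/r)^m$ is still uniformly bounded for $r$ bounded away from~$\rho$; the paper's own wording in Theorem~\ref{estimate} is equally loose on this point.
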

\begin{proof}
Let us denote $\sigma(\mathbb{K}_\dA^*)^2 := \{ \lambda^2: \lambda \in \sigma(\mathbb{K}_\dA^*) \}$.
It is enough to show that
\beq\label{square}
\lim_{r \to 1} d_H\left(\sigma(\mathbb{K}_\dA^*)^2,[0,1/4]\right) = 0
\eeq
because spectra of the NP operator of two-dimensional domains have the twin relation \cite{Mayergoyz:2005:ERN}, that is,  $-y\in\sigma(\mathbb{K}_\dA^*)$ if $y\in\sigma(\mathbb{K}_\dA^*)$. Thus,  \eqnref{square} is equivalent with
$$
\lim_{r\to1} d_H\left(\sigma(\mathbb{K}_\dA^*),[-1/2,1/2]\right) = 0.
$$

Let us prove \eqnref{square}. Let $r \in (r_0, 1)$ with $r_0 = (\frac{1+\rho}{2})^{\frac{1}{2}}$.  For any $m\ge m_0$, there exists $\lambda_m \in\sigma\left(\mathbb{K}_\dA^*\right)$ satisfying $\lambda_m\in B(m,r)$ by using Theorem \ref{estimate} and Lemma \ref{disjoint}, i.e.,
$$
\left|\lambda_m^2 - \frac{r^{2m}}{4} \right| < R(m,r), \quad m\ge m_0.
$$
From \eqnref{rad}, we have
\begin{align}
\left|\lambda_m^2 - \frac{r^{2m}}{4}\right| < \sqrt{m} \, \rho^m (1-r^2) M(m,r).
\label{triangle}
\end{align}
For a given $\epsilon>0$, we can find $m_1\ge m_0$ such that, for all $m\ge m_1$,
\beq\label{ineq1}
\sqrt{m} \, \rho^m (1-r^2) M(m,r) < \frac{\epsilon}{2}.
\eeq
We may choose $r_1<1$ such that, if $0 \le m<m_1$,
\beq\label{ineq4}
\frac{r^{2m}}{4} - \frac{r^{2m_1}}{4} \le \frac{1}{4} - \frac{r^{2m_1}}{4}  < \frac{\epsilon}{4} \quad \mbox{for any } r>r_1.
\eeq
Moreover, there is $r_2 = (1-\epsilon)^{\frac{1}{2}}\in(0,1)$ such that
\beq\label{ineq2}
\frac{r^{2m}}{4} - \frac{r^{2m+2}}{4} = \frac{r^{2m}(1-r^2)}{4} \le \frac{1-r^2}{4} < \frac{\epsilon}{4}.
\eeq
for all $r\in(r_2,1)$ and $m\ge0$.  From now on, we denote $r_{\max} = \max(r_0,r_1,r_2)$. 

Let $\mu$ be an arbitrary number in $[0,1/4]$.  From \eqnref{ineq2} and the pigeonhole principle,  we can find some integer $m'\ge0$ such that
\beq\label{ineq3}
\left| \mu - \frac{r^{2m'}}{4} \right| < \frac{\epsilon}{4}.
\eeq
If $m'\ge m_1$, then \eqnref{triangle}, \eqnref{ineq1}, and \eqnref{ineq3} imply that, for $r > r_{\max}$,
\beq\label{epsil1}
\left| \mu - \lambda_{m'}^2 \right| \le \left| \mu - \frac{r^{2m'}}{4} \right| + \left|  \lambda_{m'}^2 - \frac{r^{2m'}}{4} \right| < \frac{\epsilon}{4} + \frac{\epsilon}{2} < \epsilon.
\eeq
Otherwise,  $m'< m_1$,  \eqnref{ineq3}, \eqnref{ineq4}, \eqnref{triangle}, and \eqnref{ineq1} yields that, for $r>r_{\max}$,
\beq\label{epsil2}
\left| \mu - \lambda_{m_1}^2 \right| \le \left| \mu - \frac{r^{2m'}}{4} \right| + \left|  \frac{r^{2m'}}{4} - \frac{r^{2m_1}}{4} \right| + \left|  \lambda_{m_1}^2 - \frac{r^{2m_1}}{4} \right|  < \frac{\epsilon}{4} + \frac{\epsilon}{4} + \frac{\epsilon}{2} = \epsilon.
\eeq

Let $D_\epsilon(x)$ denote a disk of radius $\epsilon$ centered at $x$.  From \eqnref{epsil1} and \eqnref{epsil2}, we always have $\lambda\in\sigma(\mathbb{K}_\dA^*)$ such that $\mu\in D_\epsilon(\lambda^2)$ for any given $\mu\in[0,1/4]$. As $[0,1/4]$ already contains $\sigma(\mathbb{K}_\dA^*)^2$, the Hausdorff distance between $\sigma(\mathbb{K}_\dA^*)^2$ and $[0,1/4]$ turns out that, if $r>\max(r_0,r_1,r_2)$,  
$$
d_H\left(\sigma(\mathbb{K}_\dA^*)^2,[0,1/4]\right)=\inf\left\{ \delta\ge0: [0,1/4] \le \bigcup_{\lambda\in\sigma(\mathbb{K}_\dA^*)} D_\delta(\lambda^2) \right\} < \epsilon
$$
for an arbitrary $\epsilon>0$. Therefore, we conclude that
$$
\lim_{r\to1} d_H\left(\sigma(\mathbb{K}_\dA^*)^2,[0,1/4]\right) = 0.
$$
The Theorem follows by the symmetry of $\sigma(\mathbb{K}_\dA^*)\setminus\{1/2\}$.
\end{proof}

Finally, (\ref{triangle}) and (\ref{ineq1}) show that as $r\to1$, $\sigma(\Kcal^*_\Gamma)$ tends to the spectrum of the NP operator for a circular annulus.  Indeed, the eigenvalues for circular annulus with ratio of outer an inner radii equal to~$r$ are $\{\pm r^m/2\}$ and $1/2$.


\section{Numerical results}

We compute eigenvalues of a finitely approximated matrix of $\mathbb{K}_{\Ome\setminus\overline{\Om}_i}^*$ for the thin distorted annulus $A$ introduced in Figure \ref{doublyctd}.  The exterior conformal mapping~is
$$\Psi(w) = w + \frac{0.3+0.5i}{w} - \frac{0.2+0.1i}{w^3} + \frac{0.1i}{w^5} + \frac{0.05i}{w^6} + \frac{0.01i}{w^7},$$
and the interior and exterior radii are $r_\iii = 1.1$ and $r_\eee = 1.15$.
According to proof of Theorem~\ref{conv_eig}, the spectrum of the Neumann-Poincar\'{e} operator $A$ approaches  \beq\label{circ_eig}
\left\{ \pm\frac{r^m}{2} \right\}_{m\in\NN}
\eeq as $r = r_\iii/r_\eee \to 1$. Indeed,  \eqnref{circ_eig} is the spectrum of the circular annulus with the interior and exterior radii given by $r_\iii$ and $r_\eee$. The following Figure \ref{eig_comparing} compares eigenvalue distributions of both annuli.
\begin{figure}[H]
\centering
\includegraphics[scale=0.6, trim = 2cm 9cm 2cm 8cm, clip]{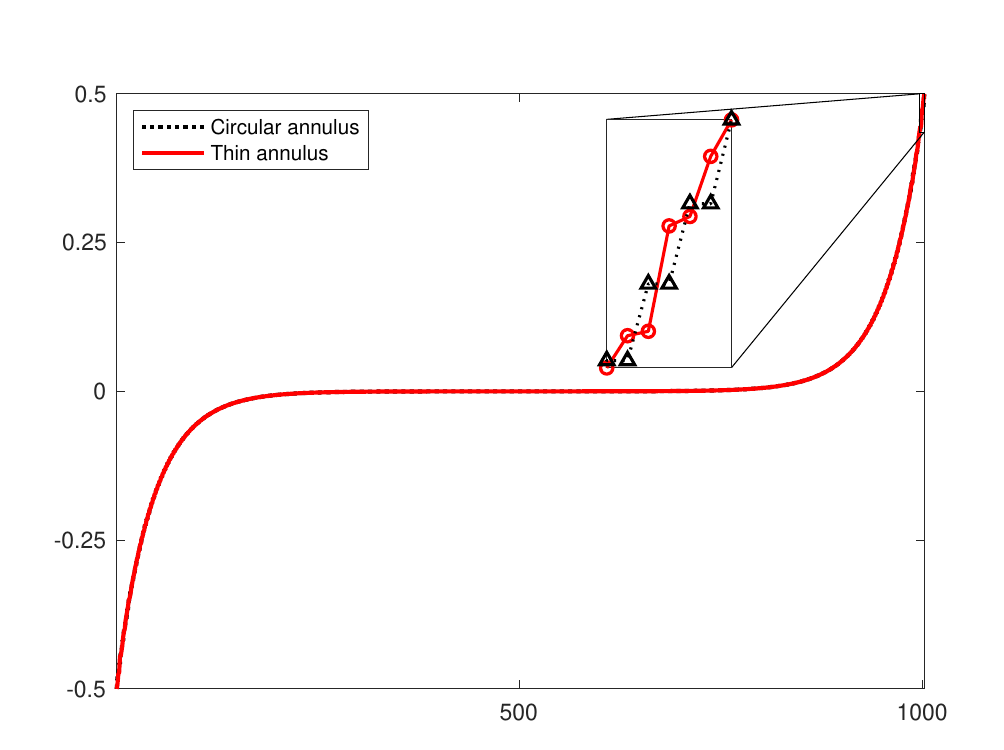}
\caption{Approximated eigenvalue distributions for the circular and the distorted annuli. We truncate $\mathbb{K}_{\Ome\setminus\overline{\Om}_i}^*$ as a finite matrix of size $1002\times1002$. The $x$-- and $y$--axes represent an index and the corresponding eigenvalue, respectively. There is a slight difference in the eigenvalues around $\pm1/2$, but as the eigenvalues approach 0, the distributions are fairly consistent.}
\label{eig_comparing}
\end{figure}
\begin{figure}[H]
\centering
\includegraphics[scale=0.6, trim = 2cm 9cm 2cm 8cm, clip]{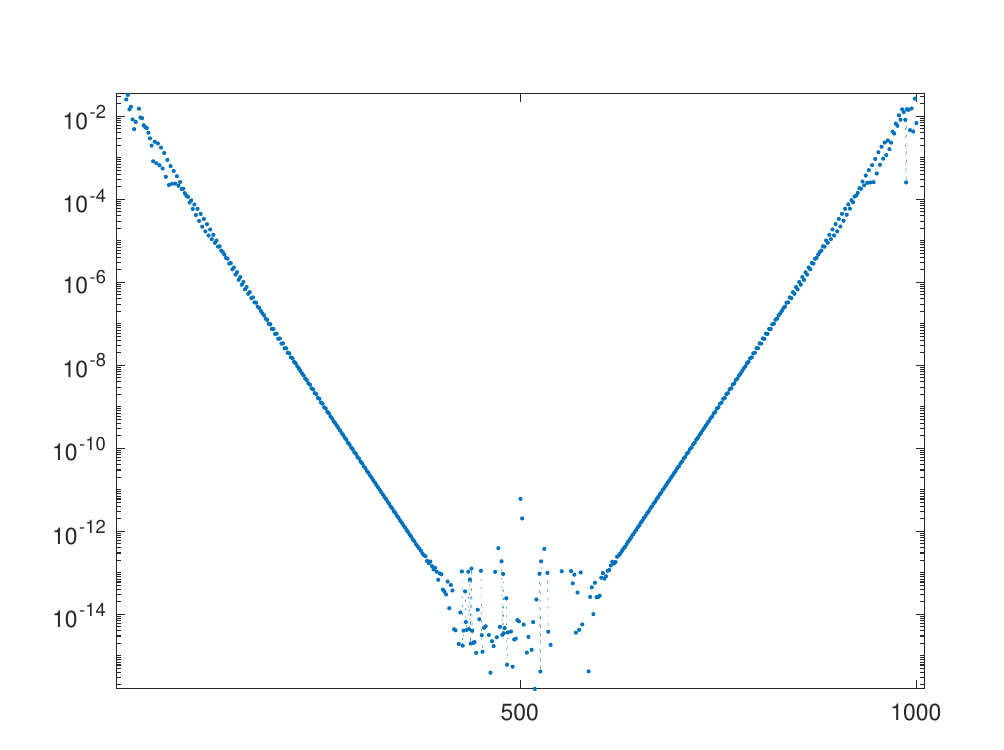}
\caption{A distribution for the difference of eigenvalues.}
\label{Difference}
\end{figure}
Figure \ref{Difference} shows the error of the difference of the eigenvalues in Figure \ref{eig_comparing}, that is, 
$$
\frac{\hat{\lambda}_n^\pm - \lambda_n^{\pm,\text{disk}} }{\lambda_n^{\pm,\text{disk}}}
$$
where $1\le n \le 1002$ and $$\lambda_n^{\pm, \text{disk}} \in \left\{ \pm\frac{1}{2}\left( \frac{r_\iii}{r_\eee} \right)^m \right\}_{0\le m \le500}.
$$
The difference is apparently exponentially decreasing near zero.

\bibliographystyle{plain}

\end{document}